\crefname{hypothesis}{Hypothesis}{Hypotheses}
\Crefname{ALC@unique}{Line}{Lines}
\colorlet{texcscolor}{blue!50!black}
\colorlet{texemcolor}{red!70!black}
\colorlet{texpreamble}{red!70!black}
\colorlet{codebackground}{black!25!white!25}
\lstdefinestyle{siamlatex}{%
  style=tcblatex,
  texcsstyle=*\color{texcscolor},
  texcsstyle=[2]\color{texemcolor},
  keywordstyle=[2]\color{texemcolor},
  moretexcs={cref,Cref,maketitle,mathcal,text,headers,email,url},
}
\DeclareTotalTCBox{\code}{ v O{} }
{ %fontupper=\ttfamily\color{texemcolor},
  fontupper=\ttfamily\color{black},
  nobeforeafter,
  tcbox raise base,
  colback=codebackground,colframe=white,
  top=0pt,bottom=0pt,left=0mm,right=0mm,
  leftrule=0pt,rightrule=0pt,toprule=0mm,bottomrule=0mm,
  boxsep=0.5mm,
  #2}{#1}
\patchcmd\newpage{\vfil}{}{}{}
\newcommand{\blue}{\color{blue} }
\def\veps{\varepsilon}
\newcommand{\la}{\ensuremath{\lambda}}
\newcommand{\bF}{\mathbb{F}}
\newcommand{\bC}{\mathbb{C}}
\newcommand{\cK}{{\cal K}}
\newcommand{\cB}{{\cal B}}
\DeclareMathOperator{\rank}{rank}
\DeclareMathOperator{\cod}{cod}
\DeclareMathOperator{\diag}{diag}
\DeclareMathOperator{\orb}{O}
\DeclareMathOperator{\GSYL}{GSYL}
\DeclareMathOperator{\bun}{B}
\DeclareMathOperator{\rev}{rev}
\DeclareMathOperator{\POL}{POL}
\DeclareMathOperator{\PEN}{PENCIL}
\newcommand{\ddd}{
\text{\begin{picture}(12,8)
\put(-2,-4){$\cdot$}
\put(3,0){$\cdot$}
\put(8,4){$\cdot$}
\end{picture}}}
\newcommand{\sdotsss}%
{\text{\raisebox{-2.2pt}{$\cdot\,$}%
 \raisebox{1.7pt}{$\cdot$}%
\raisebox{5.6pt}{$\,\cdot$}}}
\renewcommand{\le}{\leqslant}
\renewcommand{\ge}{\geqslant}
\newcommand{\hide}[1]{}
\title{Generic symmetric matrix polynomials with bounded rank and fixed odd grade\thanks{Submitted to the editors DATE.\funding{Partially supported by Ministerio de Econom\'{i}a y Competitividad of Spain
through grant MTM2015-65798-P, by Ministerio de Ciencia, Innovaci\'{o}n y Universidades of Spain through grant MTM2017--90682--REDT (F. De Ter\'{a}n and F. M. Dopico), and by the Swedish Foundation for International Cooperation in Research and Higher Education through grant STINT IB2018-7538 (A. Dmytryshyn).}}}
\date{}
\author{Fernando De Ter\'an\thanks{Departamento de Matem\'aticas, Universidad Carlos III de Madrid, Avenida de la Universidad 30, 28911, Legan\'es, Spain.
(\email{fteran@math.uc3m.es, dopico@math.uc3m.es)}} \and Andrii Dmytryshyn\thanks{School of Science and Technology, \"Orebro University, \"Orebro, SE-70182, Sweden.
(\email{andrii.dmytryshyn@oru.se)}} \and Froil\'an M. Dopico\footnotemark[2]}
\begin{document}
\maketitle

\begin{abstract} We determine the generic complete eigenstructures for $n \times n$ complex symmetric matrix polynomials of odd grade $d$ and rank at most $r$. More precisely, we show that the set of $n \times n$ complex symmetric matrix polynomials of odd grade $d$, i.e., of degree at most $d$, and rank at most $r$ is the union of the closures of the $\lfloor rd/2\rfloor+1$ sets of symmetric matrix polynomials having certain, explicitly described, complete eigenstructures. Then, we prove that these sets are open in the set of $n \times n$ complex symmetric matrix polynomials of odd grade $d$ and rank at most $r$. In order to prove the previous results, we need to derive necessary and sufficient conditions for the existence of symmetric matrix polynomials with prescribed grade, rank, and complete eigenstructure, in the case where all their elementary divisors are different from each other and of degree $1$. An important remark on the results of this paper is that the generic eigenstructures identified in this work are completely different from the ones identified in previous works for unstructured and skew-symmetric matrix polynomials with bounded rank and fixed grade larger than one, because the symmetric ones include eigenvalues while the others not. This difference requires to use new techniques.
\end{abstract}

\begin{keywords}
complete eigenstructure, genericity, matrix polynomials, symmetry, normal rank, orbits, bundles, pencils
\end{keywords}
\begin{AMS}
15A18, 15A21, 47A56, 65F15
\end{AMS}

\section{Introduction}\label{sect.intro}

We deal in this paper with symmetric $n\times n$ matrix polynomials with complex coefficients, i.e.,
\begin{equation}\label{poly}
P(\lambda) = \lambda^{d}A_{d} + \dots +  \lambda A_1 + A_0,
\quad A_i^\top=A_i, \ A_i \in \mathbb C^{n \times n},\  \ \text{for } i=0, \dots, d,
\end{equation}
where $A^\top$ denotes the transpose of the matrix $A$. The matrix polynomial \eqref{poly} is said to have {\em grade} $d$, since the coefficient $A_d$ is allowed to be zero. Thus, the grade $d$ may be larger than the {\em degree} of \eqref{poly}, which is the largest index $i$ such that $A_i \ne 0$.

Symmetric matrix polynomials arise in a variety of applications, like the vibration analysis of mechanical systems and signal processing \cite{TiMe01}. The most relevant case in applications is the quadratic case ($d=2$) with real coefficients, see, for instance, \cite{BHMS13,hmmt,lambda-matrices,TiMe01}, though symmetric polynomials of higher degree, and with complex non-real coefficients have been also encountered in applications \cite{BHMS13}.

The relevant information of matrix polynomials arising in applications is typically encoded in their {\em eigenstructure} (see Section \ref{sect.prempolys} for the definition). The main goal of this paper is to describe the {\em most likely} eigenstructure of symmetric $n\times n$ complex matrix polynomials with odd grade $d$ and bounded deficient rank $r$. As a conclusion of our main result (Theorem \ref{mainth}), we will see that there is no a unique most likely eigenstructure, but several ones, for any given $n$, odd $d$, and $r$. More precisely, the number of most likely different eigenstructures is $\lfloor rd/2\rfloor+1$, where $\lfloor x\rfloor$ denotes the largest integer less than or equal to the real number $x$. A key ingredient in Theorem \ref{mainth} is the notion of {\em bundle}, consisting of the set of $n\times n$ symmetric matrix polynomials with the same grade and having the same eigenstructure, though maybe corresponding to different eigenvalues. Then, in Theorem \ref{mainth}, we describe the set of $n\times n$ complex symmetric matrix polynomials with rank at most $r$ and odd grade $d$ as the union of the closures of $\lfloor rd/2\rfloor+1$ bundles (where the topology used to define ``closures'' is the one induced by a natural Euclidean metric in the space of $n\times n$ symmetric matrix polynomials with grade $d$). Moreover, in Theorem \ref{th:open}, we prove that these bundles are open in the set of $n\times n$ complex symmetric matrix polynomials with rank at most $r$ and odd grade $d$.  The eigenstructures corresponding to these bundles are the ones that we term as {\em generic}. The reason for using this name is that the subset of matrix polynomials with these eigenstructures is open and dense in the set of $n\times n$ complex symmetric matrix polynomials with rank at most $r$ and odd grade $d$.

This work is related to a series of papers that have described the generic eigenstructures of either general (unstructured) matrix pencils (i.e., matrix polynomials of grade $d=1$) and general (unstructured) matrix polynomials with fixed grade $d >1$, or matrix pencils and polynomials with fixed grade $d >1$ with some specific symmetry structures, and in all cases with bounded (deficient) rank. In particular, the generic eigenstructures of general $m\times n$ matrix pencils with rank at most $r<\min\{m,n\}$ were described in \cite{DeDo08}, whereas the ones for $m\times n$ matrix polynomials of fixed grade $d>1$ and rank $r<\min\{m,n\}$ have been recently obtained in \cite{dmydop-laa-2017}\footnote{The case $r = \min\{m,n\}$, with $m\ne n$, was studied in \cite{DeEd95,EdEK99,VanD79} for general pencils and in \cite{dmydop-laa-2017} for general matrix polynomials with fixed grade $d >1$. When $r = \min\{m,n\}$ there is only one generic eigenstructure, in contrast with the $rd + 1$ generic eigenstructures present in the case $r<\min\{m,n\}$. The case $r = \min\{m,n\}$, with $m=n$, is trivial since pencils and matrix polynomials are generically regular with all their eigenvalues different.}. In \cite{DeTe18}, the generic eigenstructures of $\top$-palindromic and $\top$-alternating $n\times n$ matrix pencils with rank $r<n$ were obtained, and \cite{dmydop-laa-2018} presents the generic eigenstructures of $n\times n$ skew-symmetric matrix polynomials of odd grade $d$ and rank $2r<n$. Some of the developments in this last reference rely on results from \cite{DmKa14}, where a complete stratification of the set of skew-symmetric $n\times n$ matrix pencils was obtained. Table \ref{summary-table} summarizes this information. For the sake of completeness, we mention the seminal work \cite{Wate84} (see also \cite{DeEd95}), where the generic eigenstructures for general (unstructured) square singular pencils and singular symmetric and Hermitian pencils were given.

\begin{center}
\begin{table}
\begin{center}
\begin{tabular}{|c||c|c|}\hline
grade&$1$ (pencils)&$d>1$\\\hline
unstructured&\cite{DeDo08}&\cite{dmydop-laa-2017}\\
skew-symmetric&\cite{dmydop-laa-2018}&\cite{dmydop-laa-2018} ($d$ odd)\\
$\top$-palindromic&\cite{DeTe18}&Open\\
$\top$-alternating&\cite{DeTe18}&Open\\
symmetric&\cite{dtdmydop2019}&This work ($d$ odd)\\\hline
\end{tabular}
\caption{Works presenting the generic eigenstructures of matrix pencils/polynomials with bounded (deficient) rank.}\label{summary-table}
\end{center}
\end{table}
\end{center}

In particular, the present work follows, for symmetric matrix polynomials, the approach developed in \cite{dmydop-laa-2018} for skew-symmetric matrix polynomials, though the obtained generic eigenstructures have very different natures in both cases. The approach in \cite{dmydop-laa-2018} requires the knowledge of the generic eigenstructure of skew-symmetric pencils with bounded rank (which is first provided in that reference and is unique), and then, using a particular skew-symmetric linearization (with size $nd\times nd$), the unique generic eigenstructure for $n\times n$ skew-symmetric matrix polynomials of higher odd grade $d$ is obtained. A key point in the argument is to prove that the generic eigenstructure for $nd\times nd$ skew-symmetric matrix pencils with rank at most $n(d-1)+r$ is realizable as the eigenstructure of a skew-symmetric linearization of an $n\times n$ skew-symmetric matrix polynomial with rank at most $r<n$ and grade $d$. The difference in the ranks ($n(d-1)+r$ for the linearization and $r$ for the polynomial) comes from the fact that, if the matrix polynomial $P(\la)$ has rank $r$, then the corresponding linearization of $P(\la)$ will have rank $n(d-1)+r$. A similar approach to this one can now be followed for symmetric polynomials, since, recently, we have obtained the generic eigenstructures for symmetric matrix pencils with bounded rank in \cite{dtdmydop2019}.

%This seems now possible since we have just solved the corresponding problem for pencils in \cite{dtdmydop-arixv-2018}. The idea would be to use the same linearization as in \cite{dmydop-laa-2018} taking into account that a counterpart of \cite[Thm. 4.4]{dmydop-laa-2018} for symmetric matrix polynomials is already available as a particular case of \cite[Thm. 6.13]{DPVD16}, although such result can be also proved by adapting the proof of \cite[Thm. 8]{Dmyt15} from the skew-symmetric to the symmetric structure.

Thus, roughly speaking, the key point in our developments consists of proving the existence of symmetric matrix polynomials with rank at most $r$ and odd grade $d$ with complete eigenstructures corresponding (via linearization) to some of the generic eigenstructures of symmetric pencils with bounded rank identified in \cite[Thm. 3.2]{dtdmydop2019}, something that was known in the skew-symmetric case as a consequence of results in \cite{Dmyt15}, but that is open in the symmetric case. The ``some'' in the previous sentence refers to the fact that only the eigenstructures in \cite[Thm. 3.2]{dtdmydop2019} with sufficiently large minimal indices may correspond to matrix polynomials, because of the shifts on the minimal indices in the linearization with respect to those in the polynomial (see Theorem \ref{skewlin}). As a consequence, not all the generic eigenstructures of symmetric matrix pencils with bounded rank are realizable as the eigenstructure of a symmetric linearization of a symmetric matrix polynomial with bounded rank and grade $d$.
%This establishes a relevant difference with the case of both general (unstructured) and skew-symmetric matrix polynomials with bounded rank and grade $d$, since all generic eigenstructures for, respectively, general (unstructured) and skew-symmetric pencils are realizable as the eigenstructure of a general (unstructured) or a skew-symmetric polynomial with bounded rank and grade $d$.

The eigenstructures in
\cite[Thm. 3.2]{dtdmydop2019} have two key properties:
\begin{enumerate}
  \item They have eigenvalues, which are all simple (``simple'' means only one elementary divisor of degree $1$ per eigenvalue), and
  \item their right minimal indices are equal to their left minimal indices (imposed by the symmetry) and the right minimal indices differ at most by one.
\end{enumerate}
We prove in this paper that these are, also, properties that characterize the generic complete eigenstructures of $n\times n$ symmetric matrix polynomials with bounded rank $r$ and fixed odd grade $d$. However, the generic symmetric eigenstructures identified in this work are very different from the ones identified in \cite{dmydop-laa-2017,dmydop-laa-2018} for, respectively, general and skew-symmetric matrix polynomials with bounded rank and fixed grade. First, their numbers are different: there are $\lfloor rd/2\rfloor+1$ generic symmetric eigenstructures, $rd +1$ general (unstructured) generic eigenstructures and only one in the skew-symmetric case. Second, and perhaps more important, the generic symmetric eigenstructures include eigenvalues, while the others not. These differences illustrate that considering different classes of structured matrix polynomials has a dramatic effect on the corresponding sets of matrix polynomials with bounded rank and grade. It is precisely the presence of eigenvalues in the generic eigenstructures what leads to the notion of bundle, and introduces another relevant difference between this work and \cite{dmydop-laa-2017,dmydop-laa-2018}. In particular, the main eigenstructures in \cite{dmydop-laa-2017,dmydop-laa-2018} were described using {\em orbits}, which are well-studied objects. However, they are not enough to describe the generic eigenstructures in the present work, given in terms of bundles instead. A bundle is an infinite union of orbits, which results in a more complex object whose topological properties are not so well-known and deserve a more careful treatment.

As an aside result, which has entity by itself, we prove, in Theorem \ref{thm.existence}, a more general inverse result than the one needed to capture the eigenstructures coming from \cite[Thm. 3.2]{dtdmydop2019}: we prove the existence of a symmetric matrix polynomial of rank $r$ and grade $d$ whose complete eigenstructure consists of any list of linear elementary divisors corresponding to simple eigenvalues, any list of right minimal indices, and the same list of left minimal indices, whenever these prescribed elementary divisors and minimal indices satisfy the Index Sum Theorem for $r$ and $d$ \cite[Thm. 6.5]{DeDM14}.

One of the possible applications of the results in this work, as well as the ones in the references from Table \ref{summary-table}, is in the context of low-rank perturbations of matrix pencils and matrix polynomials, when one is interested in analyzing the change of the eigenstructure under such kind of perturbations (see, for instance, \cite{DMM19} and the references therein). The knowledge of the generic eigenstructures may help not only to describe the set of perturbations under consideration but also to understand the changes in the eigenstructure. As can be seen in \cite{DMM19}, there are many references available in the literature that deal with low-rank perturbations of matrix pencils, however we are only aware of the particular results in \cite{DeDo09} for matrix polynomials of fixed grade larger than $1$.

The rest of the paper is organized as follows. In Sections \ref{sect.prempencils} and \ref{sect.prempolys} we recall some basic notions on matrix pencils and matrix polynomials, respectively. Section \ref{sec.existence} presents the construction of a symmetric $n\times n$ matrix polynomial with a given grade and rank having some prescribed lists of linear elementary divisors associated with different eigenvalues and right minimal indices. Section \ref{sect.linz} presents the linearizations used to translate the problem on matrix polynomials into the framework of matrix pencils, as mentioned above, and we describe some relevant features of these linearizations. These properties are key in Section \ref{sec.main}, which is devoted to state and prove the main result of the manuscript, namely the decomposition of the set of $n\times n$ symmetric matrix polynomials with odd grade $d$ and bounded deficient rank in terms of their generic eigenstructures. In Section \ref{sec.openbunbles} we prove that the sets (bundles) of matrix polynomials with the generic eigenstructures are open in the set of $n\times n$ symmetric matrix polynomials with odd grade $d$ and rank at most $r$. This ``openness'' result is considerably hard to prove as a consequence of the fact that the generic eigenstructures include eigenvalues. Finally, in Section \ref{sec.conclusion} we summarize the main contributions of this work and indicate some lines of further research.

\section{Preliminaries on matrix pencils} \label{sect.prempencils}

We start by recalling the Kronecker-type canonical form for symmetric matrix pencils under congruence. In this paper, we consider matrix pencils over the field of complex numbers $\mathbb C$.
%Define
%$\overline{\mathbb C} := \mathbb C \cup  \infty$.

For each positive integer $n$ define the $n\times n$ identity matrix $I_n$ and the $n\times n$ symmetric matrix pencils
\begin{align*}
\label{lade}
%J_n(\lambda)&:=\begin{bmatrix}
%\lambda&1&&0\\
%&\lambda&\ddots&\\
%&&\ddots&1\\
%0&&&\lambda
%\end{bmatrix},
%& \quad
{\cal J}_n^s(\mu)&:=
\begin{bmatrix}
&&1&\lambda - \mu\\
&\ddd&\ddd&\\
1&\lambda - \mu&&\\
\lambda - \mu&&&
\end{bmatrix}
\quad \text{ and } \quad
{\cal J}_n^s(\infty)&:=
\begin{bmatrix}
&&\lambda&1\\
&\ddd&\ddd&\\
\lambda&1&&\\
1&&&
\end{bmatrix}.
%&\quad
%\Delta_n&:=\begin{bmatrix}0&&&1\\
%&&1&\\
%&\ddd&&\\
%1&&&0\\
%\end{bmatrix}.
\end{align*}
For $n=1$, we drop the index $s$ and write ${\cal J}_1(\mu):={\cal J}_1^s(\mu)=\lambda-\mu$ and ${\cal J}_1(\infty):={\cal J}_1^s(\infty)=1$. For each nonnegative integer $n$ {we} define the $n\times(n+1)$ matrices
\begin{align*}
F_n& :=
\begin{bmatrix}
1&0&&0\\
&\ddots&\ddots&\\
0&&1&0\\
\end{bmatrix}
\quad \text{ and } \quad
G_n :=
\begin{bmatrix}
0&1&&0\\
&\ddots&\ddots&\\
0&&0&1\\
\end{bmatrix},
\end{align*}
and using them {we} define the $n\times(n+1)$ matrix pencil ${\cal L}_n:=\lambda G_n + F_n$ and the $(2n+1)\times(2n+1)$ symmetric matrix pencil
$$
{\cal M}_n:=
\begin{bmatrix}0&{\cal L}_n^\top\\
{\cal L}_n &0
\end{bmatrix}.$$
\noindent Observe that ${\cal M}_0$ is just the $1 \times 1$ zero matrix pencil.

\hide{
For each positive integer $n$ define the $n$-by-$n$ unit matrix $I_n$ and the $n$-by-$n$ matrices
\begin{align*}
\label{lade}
J_n(\lambda)&:=\begin{bmatrix}
\lambda&1&&0\\
&\lambda&\ddots&\\
&&\ddots&1\\
0&&&\lambda
\end{bmatrix},& \quad
\Lambda_n(\lambda)&:=
\begin{bmatrix}
0&&&\lambda\\
&&\lambda&1\\
&\ddd&\ddd&\\
\lambda&1&&0\\
\end{bmatrix},&\quad
\Delta_n&:=\begin{bmatrix}0&&&1\\
&&1&\\
&\ddd&&\\
1&&&0\\
\end{bmatrix}.
\end{align*}
For each nonnegative integer $n$ define the $n$-by-$(n+1)$ matrices
\begin{align*}
F_n& :=
\begin{bmatrix}
1&0&&0\\
&\ddots&\ddots&\\
0&&1&0\\
\end{bmatrix},& \qquad
G_n& :=
\begin{bmatrix}
0&1&&0\\
&\ddots&\ddots&\\
0&&0&1\\
\end{bmatrix}.
\end{align*}
An $m \times n$ matrix pencil $\lambda A - B$ is {said to be} {\em strictly equivalent} to $\lambda C - D$ if and only if there are nonsingular matrices $Q$ and $R$ such that $Q^{-1}AR =C$ and $Q^{-1}BR=D$.

\begin{theorem}{\rm \cite[Ch. XII, Sect. 4]{Gant59}}\label{kron}
Each $m \times n$ matrix pencil $\lambda A - B$ is strictly equivalent
to a direct sum, uniquely determined up
to permutation of summands, of pencils of the form
\begin{align*}
{\cal E}_k(\mu)&:=\lambda I_k - J_k(\mu), \text { in which } \mu \in \mathbb C, \quad {\cal E}_k(\infty):=\lambda J_k(0) - I_k, \\
{\cal L}_k&:=\lambda G_k - F_k, \quad \text{ and } \quad {\cal L}_k^\top:=\lambda G_k^\top - F^\top_k.
\end{align*}
This direct sum is called the KCF of $\lambda A -B$.
\end{theorem}
\noindent  The regular part of $\lambda A - B$ consists of the blocks ${\cal E}_k(\mu)$ and ${\cal E}_k(\infty)$ corresponding to the finite and infinite eigenvalues, respectively. The singular part of $\lambda A - B$ consists of the blocks ${\cal L}_k$ and ${\cal L}_k^\top$ corresponding to the column and row minimal indices, respectively. The number of blocks ${\cal L}_k$ (respectively, ${\cal L}_k^\top$) in the KCF of $\lambda A - B$ is equal to the dimension of the right (respectively, left) rational null-space of $\lambda A - B$.

Define {\blue the} {\it orbit} of $\lambda A - B$ under the
action of the group $GL_m(\mathbb C) \times GL_n(\mathbb C)$ on the space of all matrix pencils by strict equivalence as follows:
\begin{equation} \label{equorbit}
\orb^e (\lambda A - B) = \{Q^{-1} (\lambda A - B) R \ : \ Q \in GL_m(\mathbb C), R \in GL_n(\mathbb C)\}.
\end{equation}
The orbit of $\lambda A - B$ is
a {\blue differential} manifold in the complex $2mn$ dimensional space.

The space of all $m\times n$ matrix pencils is denoted by $\PEN_{m \times n}$. A distance in $\PEN_{m \times n}$ can be defined with the Frobenius norm of complex matrices \cite{Highambook} as $d(\lambda A - B, \lambda C - D) := \sqrt{\|A-C\|_F^2 + \|B-D\|_F^2}$, which makes $\PEN_{m \times n}$ into a metric space. This metric allows us to consider closures of subsets of $\PEN_{m \times n}$, in particular, closures of orbits by strict equivalence, denoted by $\overline{\orb^e}(\lambda A - B)$. By using these concepts, the following result from \cite{Bole98},  see also \cite{EdEK99}, describes all the possible changes in the KCF of a matrix pencil under arbitrarily small perturbations.
}

An $n \times n$ matrix pencil $\lambda A + B$ is {said to be} {\it congruent} to $\lambda C + D$ if there is a nonsingular matrix $S$ such that $S^{\top}AS =C$ and $S^{\top}BS=D$.
In the following theorem we recall the canonical form under congruence of symmetric matrix pencils, i.e., those satisfying $(\lambda A + B)^\top = (\lambda A + B)$.

%Define the direct sum of matrix pairs as follows:
%\[(A,B)\oplus(C,D)=(A\oplus C,B\oplus D).\]

\begin{theorem}{\rm \cite{Thom91}}\label{kron}
Each $n \times n$ complex symmetric matrix pencil $\lambda A + B$ is congruent
to a direct sum, uniquely determined up
to permutation of summands, of pencils of the forms ${\cal J}_{h}^s(\mu), {\cal J}_{k}^s(\infty)$, and ${\cal M}_m$.
\end{theorem}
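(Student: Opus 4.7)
The plan is to reduce $\lambda A + B$ first to its Kronecker canonical form under strict equivalence, then use symmetry to constrain which Kronecker invariants can occur, and finally upgrade the reduction from strict equivalence to congruence by peeling off canonical blocks one at a time.

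The first step is to invoke the classical Kronecker canonical form under strict equivalence, which decomposes $\lambda A + B$ as a direct sum of Jordan-type blocks (for finite and infinite eigenvalues) together with singular blocks of the form $\mathcal{L}_k$ and $\mathcal{L}_k^\top$. Transposition exchanges $\mathcal{L}_k \leftrightarrow \mathcal{L}_k^\top$ and fixes the regular part, so from $(\lambda A + B)^\top = \lambda A + B$ the two resulting tuples of Kronecker invariants must coincide. Hence the left and right singular blocks occur in matched pairs of equal sizes, i.e.\ the singular part consists of pairs $\mathcal{L}_k \oplus \mathcal{L}_k^\top$. A direct computation then shows that the model pencils $\mathcal{J}_h^s(\mu)$, $\mathcal{J}_k^s(\infty)$ and $\mathcal{M}_m$ are themselves symmetric and have as Kronecker invariants, respectively, a single Jordan block of size $h$ at $\mu$, a single Jordan block of size $k$ at $\infty$, and one pair $\mathcal{L}_m \oplus \mathcal{L}_m^\top$. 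Consequently the existence assertion reduces to the claim that two symmetric pencils with the same Kronecker invariants are congruent.

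To establish this remaining claim I would argue by induction on $n$. Given a symmetric $\lambda A + B$, pick a Kronecker invariant of extremal type: a right minimal index of largest size, or an elementary divisor of largest degree at some eigenvalue. Construct a nonsingular $S$ whose first columns encode a minimal polynomial basis vector of the right rational null space of $\lambda A + B$ over $\mathbb{C}(\lambda)$ (singular case) or a Jordan chain at the chosen eigenvalue (regular case), and whose remaining columns complete to a basis of an appropriate complement relative to the pencil. Symmetry of $\lambda A + B$ is what guarantees that the corresponding chain on the left can be chosen as the transpose of the chain on the right, and this is precisely what permits the reduction via a single $S$ applied on both sides instead of two independent transformations. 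With this $S$, the product $S^\top (\lambda A + B) S$ will have the prescribed canonical block ($\mathcal{J}_h^s(\mu)$, $\mathcal{J}_k^s(\infty)$, or $\mathcal{M}_m$) in a leading corner, zeros in the complementary off-diagonal positions, and a smaller symmetric pencil as residual, to which the inductive hypothesis applies. Uniqueness up to permutation is immediate, since the multiplicity of each canonical block is recoverable from the Kronecker invariants, which are preserved under strict equivalence and therefore under congruence.

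I expect the main obstacle to be this inductive peeling step: choosing $S$ so as to simultaneously realize the canonical block in the leading position, annihilate the off-diagonal coupling, and leave a symmetric residual. This demands a coordinated selection of the leading Jordan (or minimal-polynomial-basis) chain together with a suitable complementary basis in its pencil-orthogonal complement, plus careful treatment of the case in which the chosen invariant is singular, since the corresponding complement can be intertwined with the regular part and must be disentangled by an additional preparatory congruence before the induction closes.
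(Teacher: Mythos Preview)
The paper does not prove this theorem at all: it is quoted verbatim from Thompson~\cite{Thom91} and used as a black box, with no argument given in the paper. So there is no ``paper's own proof'' to compare against.

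That said, your sketch follows the standard route and is broadly sound. The reduction to Kronecker canonical form plus the observation that symmetry forces the left and right minimal indices to coincide is correct, and the uniqueness argument via Kronecker invariants is fine. The inductive peeling step you describe is indeed the substantive part, and you have correctly identified the delicate issue: one must show that a Jordan chain (or minimal-basis chain) together with a complement can be chosen so that the congruence $S^\top(\lambda A+B)S$ splits off a canonical block cleanly. This is nontrivial over $\mathbb{C}$ because the bilinear form involved is symmetric rather than Hermitian, so isotropy phenomena can obstruct naive complement choices; Thompson's original proof handles this via a careful case analysis. Your proposal is an honest outline but would require real work to make the induction close, particularly in the singular case and when the leading block interacts isotropically with the residual.
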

%This direct sum is called the symmetric KCF of $\lambda A-B$.

In Theorem \ref{kron}, the different blocks ${\cal J}_{h}^s(\mu)$ and ${\cal J}_{k}^s(\infty)$ reveal the eigenstructure corresponding to finite and infinite eigenvalues, respectively, of $\la A +B$. The blocks ${\cal M}_m$ reveal the right (column) and left (row) minimal indices of $\la A+ B$, which are equal to each other in the case of symmetric matrix pencils.

The {\it orbit} of the $n\times n$ symmetric pencil $\lambda A + B$ under the
action of the group $GL_n(\mathbb C)$ on the space of all $n\times n$ symmetric matrix pencils by congruence {is the set of pencils which are congruent to $\la A+B$, namely}:
\begin{equation} \label{equorbit}
\orb^c (\lambda A + B) = \{S^{\top} (\lambda A + B) S \ : \ S \in GL_n(\mathbb C)\}.
\end{equation}
Note that all pencils in $\orb^c (\lambda A + B)$ have the same congruent canonical form, i.e. the same eigenvalues and the same canonical blocks ${\cal J}_{h}^s(\mu), {\cal J}_{k}^s(\infty)$, and the same left (and right) minimal indices or, equivalently, the same blocks ${\cal M}_m$. The {\it congruence bundle} of $\la A+B$, denoted $\bun^{\rm{c}}(\lambda A + B)$, is the union of {all} symmetric matrix pencil orbits under congruence with the same canonical block structure (equal block sizes) as $\la A+B$, but where the distinct eigenvalues (including the infinite one) may take any values as long as they remain distinct, see also \cite{DFKK15,DmJK17,DmKa14,EdEK99}.

The space of all $n\times n$ symmetric matrix pencils is denoted by $\PEN_{n \times n}^s$. Using the Frobenius norm of complex matrices \cite{Highambook}, {we} define a distance in $\PEN_{n \times n}^s$ as $d(\lambda A {+} B, \lambda C {+} D) := \sqrt{\|A-C\|_F^2 + \|B-D\|_F^2}$, which makes $\PEN_{n \times n}^s$ into a metric space. This metric allows us to consider closures of subsets of $\PEN_{n \times n}^s$, in particular, closures of congruence bundles, denoted by $\overline{\bun^c}(\lambda A {+} B)$.
%By using these concepts, the following result from \cite{Bole98},  see also \cite{EdEK99}, describes all the possible changes in the KCF of a matrix pencil under arbitrarily small perturbations.

%By $\PEN_{n\times n}^s(r)$ we denote the set of $n\times n$ symmetric matrix pencils of (normal) rank at most $r$.
The following theorem provides a description of the generic complete eigenstructures (or canonical forms under congruence) of the set of $n\times n$ symmetric matrix pencils of rank at most $r$, for $r<n$. To be {precise}, this set is presented as the union of the closures of $\lfloor\frac{r}{2}\rfloor+1$ symmetric bundles, which determine the generic eigenstructures.

\begin{theorem}{\rm (Generic eigenstructures of symmetric matrix pencils with bounded rank, \cite[Theorem 3.2]{dtdmydop2019}).} \label{th:improvedDeDo}
Let $n$ and $r$ be integers such that $n \geq 2$ and $1\leq r \leq n-1$.
Define the following $\lfloor\frac{r}{2}\rfloor+1$ symmetric canonical forms of $n\times n$ complex symmetric matrix pencils with rank $r$:
\begin{equation}\label{max}
{\cal
K}_{a} (\lambda):=\diag(\underbrace{{\cal M}_{\alpha+1},\dots,{\cal M}_{\alpha+1}}_{s},
\underbrace{{\cal M}_{\alpha},\dots,{\cal M}_{\alpha}}_{n-r-s}, {\cal J}_1(\mu_1), \dots , {\cal J}_1(\mu_{r-2a}) ),\,
\end{equation}
for $a=0,1,\ldots,\lfloor\frac{r}{2}\rfloor\, $, where $a=(n-r)\alpha+s$ is the Euclidean division of $a$ by $n-r$, and $\mu_1,\dots,\mu_{r-2a}$ are arbitrary complex numbers (different from each other).
Then:
\begin{enumerate}
\item[\rm (i)] For every $n\times n$ symmetric pencil ${\cal S }(\lambda)$ with rank
at most $r$, there exists an integer $a$ such that
$\overline{\bun^c}({\cal K}_{a}) \supseteq\overline{\bun^c}({\cal S})$.
\item[\rm (ii)] $\overline{\bun^c}({\cal K}_{a})
\not\supseteq \overline{\bun^c}({\cal K}_{a'})$ whenever $a \ne
a'$.
\item[\rm (iii)] The set of $n\times n$ symmetric matrix pencils of rank at most $r$ is a closed subset of $\PEN^{s}_{n \times n}$, and it is equal to $\displaystyle \bigcup_{0\leq a \leq \lfloor\frac{r}{2}\rfloor} \overline{\bun^c}({\cal K}_{a} )$.
\end{enumerate}
\end{theorem}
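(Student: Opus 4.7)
The plan is to prove the three parts in the order (iii)-via-(i), (i), (ii). The organizing observation is that, on each $\mathcal{K}_a$, the sum $\sum_i m_i$ of the indices of its $\mathcal{M}_{m_i}$ summands equals $a$ and is invariant along every bundle-closure; with this in hand (ii) is immediate and (iii) reduces to (i). Moreover, the rank-$\le r$ locus in $\PEN^s_{n\times n}$ is closed, being the common zero set of all $(r+1)\times(r+1)$ minors, so the union on the right side of (iii) is automatically closed inside it and (iii) collapses to the nontrivial inclusion stated in (i).

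For (i), I would fix a rank-$r'\le r$ symmetric pencil with Thom canonical form (Theorem~\ref{kron})
\[
\mathcal{S}\cong\bigoplus_{i=1}^{n-r'}\mathcal{M}_{m_i}\;\oplus\;\bigoplus_{j}\mathcal{J}_{h_j}^s(\mu_j)\;\oplus\;\bigoplus_{k}\mathcal{J}_{\ell_k}^s(\infty),
\]
set $a:=\sum_i m_i\in\{0,\ldots,\lfloor r/2\rfloor\}$ (the upper bound comes from $2a\le r'\le r$), and exhibit an explicit chain of perturbations within $\PEN^s_{n\times n}$, each realized by a one-parameter family of congruences, carrying $\mathcal{K}_a$ to $\mathcal{S}$. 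The chain is assembled from four $\sum m_i$-preserving elementary moves: (a) Jordan coalescence $\mathcal{J}_1^s(\mu_1)\oplus\cdots\oplus\mathcal{J}_1^s(\mu_h)\to\mathcal{J}_h^s(\mu)$ by letting distinct $\mu_i\to\mu$; (b) transport to infinity $\mathcal{J}_1^s(\mu)\to\mathcal{J}_1^s(\infty)$ via $\mu\to\infty$ with suitable rescaling; (c) minimal-index unbalancing $\mathcal{M}_m\oplus\mathcal{M}_m\to\mathcal{M}_{m-1}\oplus\mathcal{M}_{m+1}$, a congruence analog of the classical de Hoyos--Pokrzywa rule; and (d) rank-dropping absorption $\mathcal{J}_1^s(\mu)\to\mathcal{M}_0$ realized by $[\epsilon]^{\top}(\lambda-\mu)[\epsilon]\to 0$ as $\epsilon\to 0$. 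The assembly goes as follows: apply (d) exactly $r-r'$ times to produce the rank deficit, resulting in $n-r'$ $\mathcal{M}$-summands whose sizes still sum to $a$; apply (c) iteratively to deform this partition into the target partition $(m_1,\ldots,m_{n-r'})$ of $a$ (reachable because the starting partition dominates the target in the dominance order, a fact one verifies by induction on the number of non-balanced parts); then apply (a) and (b) to the remaining $r'-2a$ simple eigenvalues to synthesize the prescribed finite and infinite Jordan structure of $\mathcal{S}$.

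Part (ii) is then immediate from the $\sum m_i$-invariance: on $\bun^c(\mathcal{K}_a)$ the invariant takes the constant value $a$, each of moves (a)--(d) preserves it, and therefore the value $a$ persists throughout $\overline{\bun^c}(\mathcal{K}_a)$. For $a\ne a'$ the invariant distinguishes $\mathcal{K}_a$ from $\mathcal{K}_{a'}$, ruling out $\mathcal{K}_{a'}\in\overline{\bun^c}(\mathcal{K}_a)$ and symmetrically, which is (ii).

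The main obstacle I expect is the congruence-compatible realization of move (c): the original de Hoyos--Pokrzywa rule for general pencils is formulated under strict equivalence, and producing an explicit one-parameter family of \emph{congruences} $S(t)^{\top}(\mathcal{M}_m\oplus\mathcal{M}_m)S(t)$ whose limit has canonical form $\mathcal{M}_{m-1}\oplus\mathcal{M}_{m+1}$ is the one non-routine technical ingredient. The other three moves admit direct congruence realizations by explicit one-parameter families, and once (c) is secured the combinatorial bookkeeping in (i) is mechanical.
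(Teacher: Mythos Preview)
First, note that this theorem is \emph{not} proved in the present paper: it is quoted verbatim from \cite[Theorem~3.2]{dtdmydop2019} and used as a black box throughout. So there is no ``paper's own proof'' to compare against, and your proposal must be judged on its own merits.

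Your argument for (ii) has a genuine logical gap. You observe correctly that each of the moves (a)--(d) preserves $\sum_i m_i$, and that this invariant equals $a$ on $\bun^c(\mathcal{K}_a)$. But this does \emph{not} imply the invariant is constant on the topological closure $\overline{\bun^c}(\mathcal{K}_a)$: the closure is not defined via your moves, and you have not shown that every element of the closure is reachable from the bundle by a chain of moves (a)--(d). In fact the invariant is \emph{not} constant on the closure --- the zero pencil lies in every bundle closure (take $S=\epsilon I$ and let $\epsilon\to 0$) and has $\sum m_i=0$. A correct proof of (ii) must use semicontinuity properties of the eigenstructure under limits (as the paper does, via \cite{Hoyo90}, in its proof of Theorem~\ref{th:open-pencil}), or codimension inequalities, not invariance under a chosen list of degenerations.

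Your argument for (i) also fails in a concrete case. Take $n=12$, $r=10$, and $\mathcal{S}=\mathcal{M}_1^{\oplus 4}$, so $r'=8$ and $a:=\sum m_i=4$. Your recipe starts from $\mathcal{K}_4=\mathcal{M}_2\oplus\mathcal{M}_2\oplus\mathcal{J}_1(\mu_1)\oplus\mathcal{J}_1(\mu_2)$, applies (d) twice to obtain $\mathcal{M}$-partition $(2,2,0,0)$, and then proposes to reach the target partition $(1,1,1,1)$ by iterating (c). But move (c) only makes partitions \emph{less} balanced --- it moves \emph{upward} in the dominance order --- and $(2,2,0,0)$ strictly dominates $(1,1,1,1)$, so $(1,1,1,1)$ is unreachable from $(2,2,0,0)$ by (c). Your justification (``the starting partition dominates the target'') has the direction exactly backward relative to what (c) accomplishes. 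The moves (a)--(d) are therefore insufficient; one needs an additional degeneration of the type $\mathcal{M}_m\oplus\mathcal{J}_1(\mu)\rightsquigarrow\mathcal{M}_{m'}\oplus\mathcal{M}_{m''}$ with $m'+m''=m$ (which also preserves $\sum m_i$), and establishing this in the congruence setting is precisely the sort of technical ingredient you flag as the main obstacle for (c). As a minor additional point, move (c) as you state it ($\mathcal{M}_m\oplus\mathcal{M}_m\to\mathcal{M}_{m-1}\oplus\mathcal{M}_{m+1}$) only handles equal pairs; the dominance order is generated by $\mathcal{M}_j\oplus\mathcal{M}_k\to\mathcal{M}_{j-1}\oplus\mathcal{M}_{k+1}$ for $1\le j\le k$, and you need this more general version.
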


\section{Preliminaries on  matrix polynomials} \label{sect.prempolys}
As mentioned in Section \ref{sect.intro}, we are interested in this paper in $n\times n$ symmetric matrix polynomials of grade $d$, i.e. of degree at most $d$, over $\bC$. However, in this section we consider general (not necessarily symmetric) complex matrix polynomials, since most of the presented results are valid in this general setting. Those which are valid only for symmetric matrix polynomials will be clearly indicated. Moreover, throughout the rest of the paper the terms ``matrix polynomial'' and ``polynomial matrix'' are used with exactly the same meaning.

We start {by} recalling the complete eigenstructure of a matrix polynomial, i.e. the definition of the elementary divisors and minimal indices.
\begin{definition}
Let $P(\lambda)$ and $Q(\lambda)$ be two $m \times n$ matrix polynomials. Then $P(\lambda)$ and $Q(\lambda)$ are {\em unimodularly equivalent} if there exist two unimodular matrix polynomials $U(\lambda)$ and $V(\lambda)$ (i.e., $\det U(\lambda), \det V(\lambda) \in \mathbb C \backslash \{0\}$) such that
$$
U(\lambda) P(\lambda) V(\lambda) = Q(\lambda).
$$
\end{definition}
The transformation $P(\lambda) \mapsto U(\lambda) P(\lambda) V(\lambda)$
is called a {\em unimodular equivalence transformation}
and the canonical form with respect to this transformation is the {\it Smith form} { (see, for instance, \cite{Gant59})}.
\begin{theorem}[Smith form] \label{tsmiths}
Let $P(\lambda)$ be an $m\times n$ matrix polynomial over $\mathbb C$. Then there exist $r \in \mathbb N$, $r \le \min \{ m, n \}$, and {two} unimodular matrix polynomials $U(\lambda)$ and $V(\lambda)$ over $\mathbb C$ such that
\begin{equation} \label{smithform}
U(\lambda) P(\lambda) V(\lambda) =
\left[
\begin{array}{ccc|c}
g_1(\lambda)&&0&\\
&\ddots&&0_{r \times (n-r)}\\
0&&g_r(\lambda)&\\
\hline
&0_{(m-r) \times r}&&0_{(m-r) \times (n-r)}
\end{array}
\right],
\end{equation}
where $g_j(\lambda)$ is a scalar monic polynomial, for $j=1, \dots, r$, and $g_j(\lambda)$ divides $g_{j+1}(\lambda)$, for $j=1, \dots, r-1$. Moreover, the canonical form \eqref{smithform} is unique.
\end{theorem}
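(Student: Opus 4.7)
The plan is to prove existence constructively by a sequence of unimodular row and column operations — the polynomial analogue of Gaussian elimination — and then establish uniqueness via the invariance of the determinantal divisors (gcds of $k\times k$ minors) under unimodular equivalence. The whole argument exploits that $\mathbb{C}[\lambda]$ is a Euclidean domain, so division with remainder is available and termination can be controlled by a degree monovariant.

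For existence I would induct on $\min\{m,n\}$. If $P(\lambda)=0$ take $r=0$ and $U=I_m$, $V=I_n$. Otherwise, select a nonzero entry of minimal degree and move it to position $(1,1)$ by left/right multiplication by permutation matrices, which are unimodular. The key subroutine is to force $p_{11}(\lambda)$ to divide every other entry of the first row and first column: for any $p_{1k}$ not divisible by $p_{11}$, Euclidean division yields $p_{1k}=q\,p_{11}+\rho$ with $\deg\rho<\deg p_{11}$, and the column operation $C_k\mapsto C_k-q\,C_1$ (right-multiplication by a unimodular matrix) places an entry of strictly smaller degree in the matrix. Swapping it back to position $(1,1)$ and iterating makes $\deg p_{11}$ strictly decrease, so the loop terminates with $p_{11}$ dividing every entry in its first row; symmetrically for the first column. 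Column operations $C_k\mapsto C_k-(p_{1k}/p_{11})C_1$ and the analogous row operations then clear the first row and first column, leaving a block decomposition $p_{11}(\lambda)\oplus\widetilde P(\lambda)$. If $p_{11}$ fails to divide some entry of $\widetilde P$, add the offending row into row $1$ (again unimodular) and restart; the same degree monovariant forces termination. Normalizing $p_{11}$ to be monic, set $g_1:=p_{11}$ and apply the inductive hypothesis to $\widetilde P(\lambda)$; by construction $g_1\mid g_2\mid\cdots\mid g_r$.

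For uniqueness I would use the fact that for each $k\le r$ the monic gcd $D_k(\lambda)$ of the $k\times k$ minors of $P(\lambda)$ is invariant under $P\mapsto UPV$ with $U,V$ unimodular: by the Cauchy--Binet formula every $k\times k$ minor of $UPV$ is a $\mathbb{C}[\lambda]$-linear combination of $k\times k$ minors of $P$, so $D_k(P)\mid D_k(UPV)$, and applying the same argument to $U^{-1}$ and $V^{-1}$ yields the reverse divisibility. Reading $D_k$ off the right-hand side of \eqref{smithform} gives $D_k=g_1g_2\cdots g_k$, hence the invariant factors are uniquely recovered from $P$ via $g_k=D_k/D_{k-1}$ (with $D_0:=1$); the rank $r$ coincides with the largest index for which $D_k\neq 0$, so it too is an invariant of the equivalence class.

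The main obstacle is bookkeeping to guarantee that the reduction loop terminates. One must fix a strict monovariant — the degree of the current $(1,1)$ pivot — and verify that every move (the within-first-row reductions, the within-first-column reductions, and the ``import a row containing an entry not divisible by $p_{11}$'' step) strictly decreases it, so no cycle can arise between the two phases. Beyond this, the argument is a transcription to $\mathbb{C}[\lambda]$ of the classical Smith normal form proof over a Euclidean domain, and no new ideas are required.
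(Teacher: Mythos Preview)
Your argument is correct and is the standard proof of the Smith form over a Euclidean domain: existence via elementary row and column operations driven by a degree monovariant, and uniqueness via the invariance of the determinantal divisors $D_k$ under unimodular equivalence. The paper does not supply its own proof of this theorem; it simply states the result and refers to Gantmacher \cite{Gant59}, so there is nothing to compare beyond noting that your sketch is exactly the classical argument one finds there.
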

The integer $r$ from Theorem \ref{tsmiths} is called the {\it rank} of the matrix polynomial $P(\lambda)$ {(referred to as the {\em normal rank} sometimes in the literature)}. {The polynomials} $g_j(\lambda)$, {for} $j=1, \dots, r,$ are called {the} {\em invariant polynomials} of $P(\lambda)$, and each of them can be uniquely factored as
$$
g_j(\lambda) = (\lambda - \alpha_1)^{\delta_{j1}} \cdot
(\lambda - \alpha_2)^{\delta_{j2}}\cdot \ldots \cdot
(\lambda - \alpha_{l_j})^{\delta_{jl_j}},
$$
where $l_j \ge 0, \ \delta_{j1}, \dots, \delta_{jl_j} > 0$ are integers. If $l_j=0$ then $g_j(\lambda)=1$. The complex numbers $\alpha_1, \dots, \alpha_{l_j}$ are {the} {\em finite eigenvalues} of $P(\lambda)$. The {\it elementary divisors} of $P(\lambda)$ associated with each finite eigenvalue $\alpha_{k}$ is the collection of factors $(\lambda - \alpha_{k})^{\delta_{jk}}$ (possibly with repetitions).

If $P(\la)$ is a matrix polynomial of grade $d$ and zero is an eigenvalue of $\rev P(\lambda):= \lambda^dP(1/\lambda)$, then we say that $\lambda = \infty$ is an eigenvalue of $P(\lambda)$. The elementary divisors $\lambda^{\gamma_k}, \gamma_k > 0,$ for the zero eigenvalue of $\rev P(\lambda)$ are the elementary divisors associated with the infinite eigenvalue of $P(\lambda)$. We emphasize that this definition is applied even in the case {where} the exact degree of $P(\la)$ {\em is less than} $d$.

The left and right null-spaces, over the field of rational functions $\mathbb C(\lambda)$, for an $m\times n$ matrix polynomial $P(\lambda)$, {are defined} as follows:
\begin{align*}
{\cal N}_{\rm left}(P)&:= \{y(\lambda)^\top \in \mathbb C(\lambda)^{1 \times m}: y(\lambda)^\top P(\lambda) = 0_{1\times n} \}, \\
{\cal N}_{\rm right}(P)&:= \{x(\lambda) \in \mathbb C(\lambda)^{n\times 1}: P(\lambda)x(\lambda) = 0_{m\times 1}\}.
\end{align*}
Each rational subspace ${\cal V}$ of $\mathbb C(\lambda)^n$ (where ``rational'' indicates that the underlying field is $\mathbb C(\lambda)$) has bases consisting entirely of vector polynomials. A basis of ${\cal V}$ whose sum of degrees is minimal among all such bases of ${\cal V}$ is called a {\it minimal basis} of ${\cal V}$. The ordered list of degrees of the vector polynomials in any minimal basis of ${\cal V}$ is always the same. These degrees are called the {\em minimal indices} of ${\cal V}$ \cite{Forn75,Kail80}. %This allows us to define the {\blue {\it minimal indices}} of a matrix polynomial:
More precisely, let the sets $\{y_1(\lambda)^\top,...,y_{m-r}(\lambda)^\top\}$ and $\{x_1(\lambda),...,x_{n-r}(\lambda)\}$ be minimal bases of ${\cal N}_{\rm left}(P)$ and ${\cal N}_{\rm right}(P)$, respectively, ordered so that $0 \le \deg(y_1) \le \dots \le \deg(y_{m-r})$ and $0\le \deg(x_1) \le \dots \le \deg(x_{n-r})$. Let $ \eta_k = \deg(y_k)$, for $k=1, \dots , m-r$, and $ \varepsilon_k = \deg(x_k)$, for $k=1, \dots , n-r$. Then the scalars $0 \le  \eta_1 \le \eta_2 \le  \dots \le \eta_{m-r}$ and $0 \le  \varepsilon_1 \le \varepsilon_2 \le  \dots \le \varepsilon_{n-r}$ are, respectively, the {\it left} and {\it right minimal indices} of~$P(\lambda)$. Note also that, for a $n\times n$ symmetric matrix polynomial, we {can choose} $x_i(\lambda) = y_i(\lambda)$ and thus $\eta_i = \varepsilon_i,$ for $i = 1, \dots , n-r$.

The {\it complete eigenstructure} of a matrix polynomial $P(\lambda)$ is the collection of all the elementary divisors and the left and right minimal indices of $P(\lambda)$.

Now, we are in the position of introducing {\em orbits and bundles} of matrix polynomials.
Since in this paper we are mainly interested in symmetric matrix polynomials, we introduce these notions only for this type of polynomials.
%\begin{definition} \label{orbdef}
Given a symmetric matrix polynomial $P(\lambda)$, the set of symmetric matrix polynomials with the same size, grade, and complete eigenstructure as $P(\lambda)$ is called the {symmetric} \emph{orbit} of $P(\lambda)$, denoted $\orb^{ s}(P)$. Similarly, the set of symmetric matrix polynomials with the same size, grade, and complete eigenstructure as $P(\lambda)$, except that the values of the distinct eigenvalues are unspecified as long as they remain distinct, is called the {symmetric} \emph{bundle} of $P(\lambda)$ and denoted by $\bun^{ s}(P)$. Note that $\bun^{ s}(P)$ is an infinite union (over all the possible values of the distinct eigenvalues) of the orbits of the polynomials whose complete eigenstructures differ from the one of $P(\la)$ only in the values of the distinct eigenvalues. Analogous definitions for general and skew-symmetric matrix polynomials are given in \cite{Dmyt15,dmydop-laa-2017,dmydop-laa-2018,DJKV19}. {It is important to emphasize that, unlike what happens for matrix pencils, two symmetric matrix polynomials with the same size, grade, and complete eigenstructure are not necessarily congruent. Consider, for instance, the symmetric polynomials of grade $6$
$$
P_1(\la)=\left[\begin{array}{ccc}1&\la^2&\la^3\\\la^2&\la^4&\la^5\\\la^3&\la^5&\la^6\end{array}\right]\qquad\mbox{and}\qquad
P_2(\la)=\left[\begin{array}{ccc}1&\la&\la^3\\\la&\la^2&\la^4\\\la^3&\la^4&\la^6\end{array}\right].
$$
$P_1(\la)$ and $P_2(\la)$ have the same complete eigenstructure, which consists only of left and right minimal indices both equal to $\{1,2\}$. However, $P_1(\la)$ and $P_2(\la)$ cannot be congruent, since $P_1(\la)$ has a zero coefficient in $\la$ while $P_2(\la)$ has a non-zero one. Therefore, the congruence orbit, as defined in \eqref{equorbit} for symmetric pencils, is not the appropriate notion to deal with symmetric matrix polynomials, since this orbit does not contain all symmetric matrix polynomials with the same size, grade, and complete eigenstructure. This is the reason for introducing the symmetric orbit for symmetric matrix polynomials in a different way than for pencils.}
%\end{definition}
%Note that the definition of the orbit of a matrix polynomial, i.e. Definition~\ref{orbdef}, is not associated with an action of any group, in contrast to congruence orbits of symmetric matrix pencils.

Though the results presented in the remaining part of this section and in the following section are valid for an arbitrary infinite field $\bF$, we state them in the complex field $\mathbb{C}$ to be consistent with the rest of the paper.

%We will use very often the concept of polynomial minimal basis of a rational vector space introduced in \cite{Forn75}. This and other concepts that are of interest in our developments can be found in \cite[Sec. 2]{dtdvd-laa-zigzag-2016}.
We will often consider minimal bases of rational subspaces of $\mathbb{C}^n$  arranged as rows of polynomial matrices and for brevity we call such matrices also ``minimal bases'', with a clear abuse of nomenclature. An important related concept is that of {\em dual minimal bases} revisited in the following definition.

\begin{definition} {\rm \cite[Def. 2.10]{dtdvd-laa-zigzag-2016}} \label{def.dual-m-b}
{Two p}olynomial matrices $M(\lambda)\in \bC[\lambda]^{m\times n}$
  and $N(\lambda)\in \bC[\lambda]^{k\times n}$ with full row ranks
  are said to be {\em dual minimal bases}
  if they are minimal bases satisfying $m+k=n$ and $M(\lambda) \, N(\lambda)^\top=0$.
\end{definition}

As explained in \cite[p. 468]{dtdvd-laa-zigzag-2016}, the dual minimal bases described in Definition \ref{def.dual-m-b} satisfy the property stated in the next proposition, where the expression ``row degrees'' of a polynomial matrix means the degrees of the rows of such polynomial matrix.

\begin{proposition} \label{prop.dualyminind} Let $M(\lambda)\in \bC[\lambda]^{m\times n}$ and $N(\lambda)\in \bC[\lambda]^{k\times n}$ be dual minimal bases. Then, the right minimal indices of $M(\la)$ are the row degrees of $N(\la)$ and the left minimal indices of $N(\la)^\top$ are the row degrees of $M(\la)$.
\end{proposition}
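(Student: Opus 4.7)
My plan is to chase the definitions carefully, exploiting the orthogonality relation $M(\la)N(\la)^\top = 0$ together with the rank/dimension identities afforded by $m+k=n$ and full row ranks.

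First, I would unpack what the orthogonality condition says. Writing the rows of $M(\la)$ as $m_1(\la),\ldots,m_m(\la)\in\bC(\la)^{1\times n}$ and the rows of $N(\la)$ as $n_1(\la),\ldots,n_k(\la)\in\bC(\la)^{1\times n}$, the identity $M(\la)N(\la)^\top=0$ says exactly that $m_i(\la)\,n_j(\la)^\top=0$ for all $i,j$. Read one way, this means each column $n_j(\la)^\top$ lies in $\nsp_{\mathrm{right}}(M)$; read the other way ($NM^\top=0$ after transposition), it means each row $m_i(\la)$ lies in $\nsp_{\mathrm{left}}(N^\top)$.

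Next, I would do the dimension count to upgrade these inclusions to equalities of spans. Since $M$ has full row rank $m$, the rational subspace $\nsp_{\mathrm{right}}(M)\subseteq\bC(\la)^n$ has dimension $n-m=k$. The $k$ rows of $N$, transposed, already lie in $\nsp_{\mathrm{right}}(M)$, and they are $\bC(\la)$-linearly independent because $N$ has full row rank $k$. Hence the rows of $N$ (as column vectors) form a $\bC(\la)$-basis of $\nsp_{\mathrm{right}}(M)$. An entirely symmetric argument, using that $N^\top$ has full column rank $k$ so $\nsp_{\mathrm{left}}(N^\top)$ has dimension $n-k=m$, shows that the $m$ rows of $M$ form a $\bC(\la)$-basis of $\nsp_{\mathrm{left}}(N^\top)$.

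Finally, I would invoke the hypothesis that $M$ and $N$ are themselves minimal bases. By definition this means that the rows of $N$ are a minimal basis of the rational subspace they span, and likewise for $M$. Combined with the previous step, those spans are $\nsp_{\mathrm{right}}(M)$ and $\nsp_{\mathrm{left}}(N^\top)$ respectively. By the definition of right and left minimal indices stated in Section~\ref{sect.prempolys} (they are the row degrees of any minimal basis of the right/left null space), we conclude that the right minimal indices of $M(\la)$ are the row degrees of $N(\la)$, and the left minimal indices of $N(\la)^\top$ are the row degrees of $M(\la)$.

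There is no real obstacle here beyond bookkeeping: the only point requiring a moment of care is the transpose conventions (distinguishing $\nsp_{\mathrm{right}}(M)$ from $\nsp_{\mathrm{left}}(N^\top)$), and the observation that a minimal basis of a rational subspace is automatically a minimal basis of any subspace it spans, which is immediate from the uniqueness of the ordered list of minimal-basis degrees.
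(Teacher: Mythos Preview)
Your argument is correct: the orthogonality $M(\la)N(\la)^\top=0$ places the columns of $N(\la)^\top$ inside $\nsp_{\mathrm{right}}(M)$, the dimension count $n-m=k$ together with full row rank of $N$ shows they span it, and then the hypothesis that $N$ is a minimal basis identifies the row degrees of $N$ with the right minimal indices of $M$; the second claim is the symmetric argument. The only cosmetic remark is that your closing observation (``a minimal basis of a rational subspace is automatically a minimal basis of any subspace it spans'') is tautological---a basis spans exactly one subspace---so what you really use is just that the span of the rows of $N$, already known to be minimal-basis-spanned, has been identified with $\nsp_{\mathrm{right}}(M)$.

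As for comparison with the paper: the paper does not give a proof at all but simply cites \cite[p.~468]{dtdvd-laa-zigzag-2016}. Your write-up therefore supplies a self-contained derivation from the definitions in Section~\ref{sect.prempolys}, which is more informative for a reader than the bare citation.
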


A result that is fundamental in Section \ref{sec.existence} is the next theorem.

\begin{theorem} {\rm \cite[Thm. 6.1]{dtdvd-laa-zigzag-2016}} \label{thm.exist-dual-min-bases}
Let  $(\eta_1, \eta_2, \dotsc, \eta_m)$  and $(\veps_1, \veps_2, \dotsc, \veps_k)$
be any two lists of nonnegative integers such that
\begin{equation} \label{eqn.2keyequality}
  \sum_{i=1}^m \eta_i \,=\, \sum_{j=1}^k \varepsilon_j \, .
\end{equation}
Then, there exist two matrix polynomials $M(\la)\in\bC[\la]^{m\times (m+k)}$
and $N(\la)\in\bC[\la]^{k\times (m+k)}$ that are dual minimal bases,
and whose row degrees are $(\eta_1, \eta_2, \dotsc, \eta_m)$
and $(\veps_1, \veps_2, \dotsc, \veps_k)$, respectively.
\end{theorem}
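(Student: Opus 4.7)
The plan is to give an explicit constructive proof, proceeding by induction on the common total $D:=\sum_{i=1}^m\eta_i=\sum_{j=1}^k\varepsilon_j$, together with a routine treatment of rows of degree zero. The base case $D=0$ forces every $\eta_i$ and $\varepsilon_j$ to be zero, so I would take $M(\lambda)=[I_m\mid 0_{m\times k}]$ and $N(\lambda)=[0_{k\times m}\mid I_k]$: these are constant, mutually orthogonal, and clearly minimal bases of the coordinate subspaces they span. Rows of degree zero in the general case can be handled by setting aside a coordinate column per zero-degree row (to be filled with a single $1$) and solving the reduced problem where all row degrees are positive; the corresponding dimensions add up correctly.

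For the inductive step I would use as atomic building block the dual pair
\[
\mathcal{L}_\eta(\lambda)=\lambda G_\eta+F_\eta\in\bC[\lambda]^{\eta\times(\eta+1)},\qquad R_\eta(\lambda):=\bigl[\,1,\;-\lambda,\;\lambda^2,\;\dots,\;(-\lambda)^\eta\,\bigr]\in\bC[\lambda]^{1\times(\eta+1)},
\]
for which $\mathcal{L}_\eta R_\eta^\top=0$ and both are minimal bases with row degrees $(1,1,\dots,1)$ and $(\eta)$ respectively. The desired $M(\lambda)$ and $N(\lambda)$ are then assembled by a \emph{zigzag} arrangement: partition the $m+k$ columns into blocks so that the $i$-th row of $M$ occupies $\eta_i+1$ consecutive columns in the pattern $[0,\dots,0,\,1,\lambda,\lambda^2,\dots,\lambda^{\eta_i},\,0,\dots,0]$ (up to signs), and the $j$-th row of $N$ similarly occupies $\varepsilon_j+1$ consecutive columns, while every pair of rows (one from $M$, one from $N$) shares a block whose local structure replicates the atomic pair $(\mathcal{L}_\eta,R_\eta)$. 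The orthogonality $M(\lambda)N(\lambda)^\top=0$ then follows block by block from the atomic identity $\mathcal{L}_\eta R_\eta^\top=0$, and the column count reconciles because the sharing of columns across consecutive rows uses exactly $m+k$ positions. The condition $\sum\eta_i=\sum\varepsilon_j$ is precisely what is needed for the zigzag to close up consistently.

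\emph{Main obstacle.} The hard part is not producing $M$ and $N$ with the right row degrees and orthogonality, but certifying that both are \emph{minimal} bases. I would verify this through Forney's criterion: a polynomial matrix is a minimal basis iff it has full row rank at every $\lambda\in\bC$ and its highest-row-degree coefficient matrix has full row rank. In the zigzag construction, rank at a finite $\lambda_0$ follows from a triangular/staircase argument on the leading monomials of each row, whereas the full rank of the leading coefficient matrix follows from the block-pencil structure inherited from the $\mathcal{L}_\eta$ atoms. The same reasoning applies to $N(\lambda)$ by symmetry of the construction.

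\emph{Alternative route.} If the zigzag verification proves cumbersome, one can instead argue via dimensions: once $M(\lambda)$ and $N(\lambda)$ satisfy $MN^\top=0$, have the prescribed row degrees, and have the correct sizes $m\times(m+k)$ and $k\times(m+k)$, the Index Sum Theorem forces the sum of the column minimal indices of $M(\lambda)$ to equal $\sum\eta_i=\sum\varepsilon_j$; since the rows of $N$ already produce a basis of the right null-space of $M$ of the correct total degree, those rows must constitute a minimal basis, and by Proposition~\ref{prop.dualyminind} the pair is automatically dual minimal. This reduces the verification to checking only that $M(\lambda)$ itself is a minimal basis, which is the more tractable half.
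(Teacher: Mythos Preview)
This theorem is not proved in the paper; it is quoted from \cite{dtdvd-laa-zigzag-2016} (Theorem~6.1 there) and used as a black box in the proof of Theorem~\ref{thm.existence}. So there is no in-paper argument to compare against. Your sketch is in fact aimed at the construction of that cited reference (the word ``zigzag'' in its title refers to exactly the object you are trying to build), so in spirit you are on the right track.

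That said, your sketch has a genuine gap at the column-count step. You assert that placing the $i$th row of $M$ on $\eta_i+1$ consecutive columns (pattern $[0,\dots,0,1,\lambda,\dots,\lambda^{\eta_i},0,\dots,0]$) and the $j$th row of $N$ on $\varepsilon_j+1$ consecutive columns, with single-column overlaps, yields matrices with exactly $m+k$ columns. But laying the $M$-rows end to end with one-column overlaps uses $1+\sum_i\eta_i=1+D$ columns (and likewise for $N$), so the construction as you describe it produces $m\times(D+1)$ and $k\times(D+1)$ matrices, not $m\times(m+k)$ and $k\times(m+k)$. These sizes coincide only when $D+1=m+k$, which is not implied by \eqref{eqn.2keyequality}. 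For a concrete failure, take $m=k=1$ and $\eta_1=\varepsilon_1=2$: one needs $1\times 2$ row vectors of degree~$2$, but a row of the shape $[1,\lambda,\lambda^2]$ already has three entries. The actual zigzag construction in \cite{dtdvd-laa-zigzag-2016} is more delicate: one merges the partial-sum sequences of $(\eta_i)$ and $(\varepsilon_j)$ into a single staircase and assigns columns according to that merged sequence, so that each column is shared between one $M$-block and one $N$-block; the bookkeeping that makes this close up in exactly $m+k$ columns is the real content of the proof and is missing from your outline.

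Your alternative route also has a gap: to deduce that the rows of $N$ are a \emph{minimal} basis of $\mathcal{N}_{\mathrm{right}}(M)$ by matching total degrees, you first need to know they are a basis at all, i.e., that $N(\lambda)$ has full row rank over $\bC(\lambda)$. This is not automatic from $MN^\top=0$ and the row count, and verifying it is essentially the same work as checking Forney's criterion for $N$ directly.
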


An important property of any minimal basis $M(\la) \in\bC[\la]^{m\times (m+k)}$, with $k >0$, is that it can be completed to a unimodular matrix (see, for instance, \cite[Lemma 2.16 (b)]{DeDV15}, though this property has been reproved many times in the literature). This means that there exists a polynomial matrix $Z(\la) \in\bC[\la]^{k\times (m+k)}$ such that
\begin{equation} \label{eq.completion}
\begin{bmatrix}
  M(\la) \\
  Z(\la)
\end{bmatrix} \in \bC [\la] ^{(m+k) \times (m+k)} \quad \mbox{is unimodular.}
\end{equation}

The next trivial result is valid for matrices with entries in any field $\bF$ (in particular, for matrices with entries in the field of complex rational functions $\bC (\la)$ and, thus, for complex matrix polynomials) and will be also used in Section \ref{sec.existence}.

\begin{lemma} \label{lemm.trivial} Let $A \in \bF^{m \times r}$, $B \in \bF^{r \times r}$, and $C \in \bF^{r \times n}$ be three matrices with rank $r$. Then, the product $ABC \in \bF^{m \times n}$ has also rank $r$, the left null-space of $ABC$ is equal to the left null-space of $A$, and the right null-space of $ABC$ is equal to the right null-space of $C$.
\end{lemma}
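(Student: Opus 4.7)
The plan is to exploit the fact that the hypothesis on $B$ forces it to be invertible (a square matrix of full rank), and that $A$ has full column rank and $C$ has full row rank. These three observations together make every claim of the lemma essentially a one-line computation, so the proof will be a short string of inclusions rather than a deep argument.

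First I would record the three basic observations: $B \in \bF^{r\times r}$ with $\rank B = r$ is invertible, so has a two-sided inverse $B^{-1}$; $A \in \bF^{m\times r}$ has full column rank, so admits a left inverse $A^{L} \in \bF^{r \times m}$ with $A^{L}A = I_r$; and $C \in \bF^{r\times n}$ has full row rank, so admits a right inverse $C^{R} \in \bF^{n \times r}$ with $CC^{R} = I_r$. All three facts are standard linear algebra over an arbitrary field.

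Next, for the rank statement, I would argue $\rank(ABC)\le \min\{\rank A,\rank B,\rank C\} = r$ from the general submultiplicativity of rank, and then $r = \rank(I_r) = \rank(A^{L}(ABC)C^{R}B^{-1}\cdot B) = \rank(A^{L}ABCC^{R}) \le \rank(ABC)$, where I used the invertibility of $B$ and the one-sided inverses of $A$ and $C$. Hence $\rank(ABC)=r$.

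For the null-space identities I would argue both inclusions separately. For the left null-space: trivially, $y^\top A = 0$ implies $y^\top ABC = 0$. Conversely, if $y^\top ABC = 0$, then multiplying on the right by $C^{R}B^{-1}$ gives $y^\top A = 0$. The right null-space argument is symmetric: if $Cx = 0$ then $ABCx = 0$, and conversely $ABCx = 0$ implies, after multiplying on the left by $B^{-1}A^{L}$, that $Cx = 0$. There is no real obstacle here — the only thing to keep track of is to cite (or silently use) the existence of one-sided inverses for full-rank rectangular matrices over an arbitrary field, which is classical. Since the result is labelled ``trivial'' in the paper, I expect the write-up to be no longer than a few lines.
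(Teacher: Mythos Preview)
Your argument is correct: the invertibility of $B$ together with the one-sided inverses $A^{L}$ and $C^{R}$ immediately yield both the rank equality and the two null-space identities. The paper does not actually prove this lemma --- it is introduced as a ``trivial result'' and stated without proof --- so there is nothing to compare against beyond noting that your write-up is exactly the kind of short verification the authors had in mind.
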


Finally, we introduce a bit more of notation. The vector space, over the field $\bC$, of $n \times n$ matrix polynomials of grade $d$ with complex coefficients is denoted by $\POL_{d,n\times n}$, and its subspace of $n \times n$ symmetric matrix polynomials of grade $d$ with complex coefficients by $\POL^s_{d,n\times n}$.

\section{Symmetric matrix polynomials with prescribed eigenstructure in the case of different elementary divisors of degree 1} \label{sec.existence}

This section is devoted to the following inverse problem: given a list of $s$ linear (scalar) polynomials with different roots and a list of nonnegative numbers, when is there a symmetric matrix polynomial with prescribed size $n\times n$, grade $d$, and rank $r$, and having the given lists as the list of finite elementary divisors and right minimal indices, respectively, together with $t\in\{0,1\}$ infinite linear elementary divisors? It is known that the Index Sum Theorem \cite[Thm. 6.5]{DeDM14} must be satisfied by such a symmetric matrix polynomial, namely, $s$ (sum of all degrees of the finite elementary divisors), $t$ (sum of all degrees of the infinite elementary divisors), and the sum of twice the nonnegative integers (namely, the sum of the left and right minimal indices) must add up to $dr$. Theorem \ref{thm.existence} solves the inverse problem by showing that this condition is also sufficient. We advance that Theorem \ref{thm.existence} is key for proving the main result in this paper, i.e., Theorem \ref{mainth}. The reason is that Theorem \ref{thm.existence} guarantees that there exist symmetric matrix polynomials realizing the generic eigenstructures identified in Theorem \ref{mainth}.

\begin{theorem} \label{thm.existence} Let $n,d,$ and $r \leq n$ be positive integers. Let
\begin{enumerate}
  \item $(\lambda-\mu_1), \ldots, (\lambda-\mu_s)$ be $s$ scalar {monic} polynomials of degree $1$, where $\mu_1, \ldots , \mu_s \in \mathbb{C}$ satisfy $\mu_i \ne \mu_j$ if $i\ne j$,
  \item $t \in \{0,1\}$, and
  \item $\varepsilon_1 , \ldots , \varepsilon_{n-r}$ be a list of nonnegative integers.
\end{enumerate}
Then, there exists an $n \times n$ symmetric matrix polynomial, $P(\lambda) \in \mathbb{C} [\lambda]^{n \times n}$, with rank $r$, {\bf grade} $d$, finite elementary divisors $(\lambda-\mu_1), \ldots, (\lambda-\mu_s)$, $t$ infinite elementary divisors of degree $1$, and right minimal indices equal to $\varepsilon_1 , \ldots , \varepsilon_{n-r}$ (and, as a consequence, also left minimal indices equal to $\varepsilon_1 , \ldots , \varepsilon_{n-r}$), if and only if the following condition holds
\begin{itemize}
  \item[\rm (a)] $\displaystyle s + t + 2 \, \sum_{i=1}^{n-r} \varepsilon_{i} = r \, d$.
\end{itemize}

Moreover, assuming that {\rm(a)} holds, then the {\bf degree} of $P(\lambda)$ is exactly equal to $d$ if and only if $t < r$.
\end{theorem}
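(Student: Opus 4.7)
Necessity of (a) is immediate from the Index Sum Theorem \cite[Thm.~6.5]{DeDM14}: for any $n\times n$ matrix polynomial of grade $d$ and rank $r$, the degrees of all finite and infinite elementary divisors plus all left and right minimal indices sum to $rd$, and the symmetry of $P(\la)$ forces the lists of left and right minimal indices to coincide.

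For sufficiency, the plan is to construct $P$ as a sandwich $P(\la) = M(\la)^\top R(\la)\, M(\la)$, where $M(\la)\in\bC[\la]^{r\times n}$ is a minimal basis with a prescribed dual, and $R(\la) \in \bC[\la]^{r\times r}$ is a symmetric regular matrix polynomial carrying the prescribed elementary divisors. First, invoke Theorem~\ref{thm.exist-dual-min-bases} to produce dual minimal bases $M(\la)$ and $N(\la)\in \bC[\la]^{(n-r)\times n}$ where $N$ has the prescribed row degrees $\veps_1,\ldots,\veps_{n-r}$ and $M$ has row degrees $\eta_1,\ldots,\eta_r$ summing to $\sum_j \veps_j$, distributed as uniformly as possible to minimize $\eta_{\max}$. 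Next, complete $M$ to a unimodular matrix $\tilde M$ via \eqref{eq.completion}, which gives
\[
P \;=\; \tilde M^\top\, \diag\!\bigl(R,\, 0_{(n-r)\times(n-r)}\bigr)\, \tilde M,
\]
so $P$ and $\diag(R,0)$ have the same Smith form. This immediately yields $\rank P = r$ and that the finite elementary divisors of $P$ are exactly those of $R$. Furthermore, by Lemma~\ref{lemm.trivial}, the right null-space of $P$ equals the right null-space of $M$, which by Proposition~\ref{prop.dualyminind} is spanned by the rows of $N$; hence the right (and, by symmetry, the left) minimal indices of $P$ are precisely $\veps_1,\ldots,\veps_{n-r}$.

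The delicate step is the design of $R(\la)$: it must be $r\times r$, symmetric and regular, with exactly the prescribed $(\la-\mu_i)$ as its finite elementary divisors, and the sandwich $M^\top R M$ must have grade at most $d$ and yield precisely $t$ infinite elementary divisors of degree $1$ when viewed at grade $d$. If $R$ is $r\times r$ regular of grade $g_R$, the Index Sum for $R$ forces $g_R = (s+t)/r$, which is an integer only when $r \mid s+t$. In that case one can take $R$ diagonal with scalar polynomials of degree $g_R$ (or $g_R-1$ in the single block carrying the infinite divisor when $t=1$), and the balanced choice of the $\eta_i$'s keeps $P$ within grade $d$. In the remaining cases one must use a block-structured $R$, e.g., of anti-diagonal block form $R = \begin{bmatrix} R_{11} & R_{12}\\ R_{12}^\top & 0\end{bmatrix}$, paired with a splitting $M = [M_1^\top\ M_2^\top]^\top$ into rows of small and large degrees, so that the large-degree contribution $M_2^\top \cdot 0 \cdot M_2$ vanishes identically and leading-term cancellations keep $P$ within grade $d$. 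Verifying that such a compatible joint choice of $R$, $\eta_i$'s and block shape exists for every admissible triple $(s,t,\{\veps_i\})$ satisfying (a)---particularly in the extremal regime $\sum_j\veps_j\approx rd/2$, where $\eta_{\max}$ is forced close to $\lceil d/2\rceil$ and almost no room is left for $\deg R$---is the main obstacle in the proof.

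Finally, for the degree statement: since $P$ has grade $d$ and exactly $t$ infinite elementary divisors, each of degree one, the evaluation $\rev_d P(0) = A_d$ (the coefficient of $\la^d$ in $P$) has rank $r-t$, because the local Smith form of $\rev_d P$ at $\la=0$ consists of $r-t$ units, $t$ copies of $\la$, and $n-r$ zeros. Hence $\deg P = d$---equivalently $A_d\neq 0$---if and only if $t<r$.
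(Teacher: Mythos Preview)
Your overall architecture is exactly the paper's: the sandwich $P=M^\top R\,M$ with $M$ coming from dual minimal bases (Theorem~\ref{thm.exist-dual-min-bases}) and a balanced choice of row degrees $\eta_i$, the unimodular completion \eqref{eq.completion} to read off the Smith form, and Lemma~\ref{lemm.trivial} plus Proposition~\ref{prop.dualyminind} to pin down the minimal indices. Your degree-versus-$t$ argument via $\rank A_d = r-t$ is also the paper's argument, stated slightly more sharply.

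The gap is precisely the step you flag as ``the main obstacle'': you do not actually construct $R$ in all cases, and in particular you do not verify that the grade of $M^\top R\,M$ can always be kept $\le d$. The paper resolves this with a clean three-case split on $d-2q_\veps$ (where $q_\veps=\lfloor \sum_i\veps_i/r\rfloor$): when $d-2q_\veps=0$ take $R=I_r$; when $d-2q_\veps\ge 2$ take $R$ diagonal with entries of degrees $d-2q_\veps$ and $d-2q_\veps-2$, matched to the columns of $M$ of degree $q_\veps$ and $q_\veps+1$ respectively; and when $d-2q_\veps=1$ use a block $R=\diag\!\bigl(R^{(1)},\left[\begin{smallmatrix}0&I\\I&0\end{smallmatrix}\right]\bigr)$, which is exactly the anti-diagonal device you sketch. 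The bookkeeping you omit is that $s+t+2w_\veps=r(d-2q_\veps)$ forces the block sizes and degree counts to match up in each case, so every term in the expansion of $M^\top R\,M$ lands at grade exactly $d$.

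One simplification you miss: rather than building the $t=1$ infinite divisor directly into $R$ and then tracking how it survives the sandwich (delicate, since the infinite structure of $P$ at grade $d$ is not simply inherited from that of $R$), the paper first applies a M\"obius transformation sending $\infty$ to a fresh finite point, constructs $Q$ with only finite divisors, and then M\"obius-transforms back. This reduces the entire construction to the $t=0$ case and makes the ``no extra infinite divisors'' check a one-line Index Sum argument.
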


\begin{proof}
Necessity of (a). If there exists $P(\la)$ with the properties {in} the statement, then (a) follows from the Index Sum Theorem \cite[Thm. 6.5]{DeDM14}.
%and (b) and (c) from the fact that the number of elementary divisors with the %same eigenvalue is less than or equal to the rank of the matrix polynomial %\cite[p. 142, Vol I]{Gant59}.

Sufficiency of (a). We consider a M\"obius transformation
$$m_A : \bC \cup \{\infty\} \longrightarrow \bC \cup \{\infty\} \quad \mbox{defined as} \quad m_A (\la) := (a \la + b)/(c \la + d)$$
in terms of the nonsingular matrix
$A = \left[\begin{smallmatrix}
a & b \\
c & d
\end{smallmatrix}\right] \in \mathrm{GL} (2, \bC)$
and such that $\la_1 := m_A (\mu_1), \ldots, \la_s := m_A (\mu_s), \theta := m_A (\infty)$ {all belong to} $\bC$ {(namely, they are not $\infty$)}. Then, the proof proceeds by constructing a {\em symmetric} matrix polynomial $Q(\la) \in \POL^s_{d,n\times n}$ with rank $r$ and whose complete eigenstructure consists of the following finite elementary divisors
\begin{equation} \label{eq.Qeldiv}
(\la - \la_1), \ldots , (\la - \la_s), \underbrace{(\la - \theta)}_{t},
\end{equation}
the right minimal indices $\varepsilon_1 , \ldots , \varepsilon_{n-r}$, and the left minimal indices also equal to $\varepsilon_1 , \ldots , \varepsilon_{n-r}$. Once $Q(\la)$ is constructed, we consider the M\"obius transformation $M_A : \POL_{d,n\times n} \longrightarrow \POL_{d,n\times n}$ as defined in
\cite[Def. 3.4]{MMMM-laa-mobius-2015}. Observe that $P(\la) = M_A (Q) (\la) \in \POL_{d,n\times n}$ is the desired matrix polynomial, because $P(\la)$ is symmetric if $Q (\la)$ is by the definition of $M_A$, $\rank P = \rank Q$ by \cite[Prop. 3.29]{MMMM-laa-mobius-2015}, the elementary divisors of $P(\la)$ are $(\lambda-\mu_1), \ldots, (\lambda-\mu_s)$ and $t$ infinite elementary divisors of degree $1$ by \cite[Thm. 5.3]{MMMM-laa-mobius-2015}, and the minimal indices of $P(\la)$ and $Q(\la)$ are identical by \cite[Thm. 7.5]{MMMM-laa-mobius-2015}.

In the case $r=n$, it is very easy to construct such a  $Q(\la)$ because there are no minimal indices and condition (a) implies that the number of linear elementary divisors in the list \eqref{eq.Qeldiv} is $r\, d = n\, d$. Then, these elementary divisors can be arranged into $n$ groups with $d$ elementary divisors each, and from each of these $n$ groups a scalar polynomial of degree $d$ can be obtained as the product of the elementary divisors in the group. We obtain in this way $n$ scalar polynomials $q_{11} (\la), \ldots, q_{nn}(\la)$ of degree exactly $d$. Then $Q(\la) = \mbox{diag} (q_{11} (\la), \ldots, q_{nn}(\la))$ is the desired matrix polynomial, since it is symmetric, has degree exactly $d$, has rank exactly $r=n$, has no minimal indices, has no eigenvalues at $\infty$, because the matrix coefficient of degree $d$ of $Q(\la)$ is invertible, and has the elementary divisors in the list \eqref{eq.Qeldiv}, as a consequence of \cite[Vol. I, Thm. 5, p. 142]{Gant59} and the fact that the elementary divisors in \eqref{eq.Qeldiv} are all different from each other. Thus, in the rest of the proof we assume that $r<n$.

For $r<n$, $Q(\la)$ will be constructed as the product of three matrix polynomials, each of them of rank $r$, i.e.,
\begin{equation}\label{eq.factorformofQ}
Q(\la) = U(\la) S(\la) U(\la)^\top
\end{equation}
with $U(\la) \in \bC [\la]^{n\times r}$, and $S(\la) \in \bC [\la]^{r\times r}$ either diagonal or with a very simple symmetric structure. Thus, $Q(\la)$ is symmetric by construction.

Let us construct first the matrix $U(\la)$ in \eqref{eq.factorformofQ}. For this purpose, we consider the Euclidean division of ${\varepsilon:=}\sum_{i=1}^{n-r} \varepsilon_{i}$ by $r$
\begin{equation}\label{eq.eucldiv2}
%\sum_{i=1}^{n-r} \varepsilon_{i}
 {\varepsilon}= r q_\veps + w_\veps, \quad \mbox{where $0\leq w_\veps < r$},
\end{equation}
and we define the list
\begin{equation}\label{eq.degreesU}
(\eta_1, \ldots , \eta_r) = (\underbrace{q_\veps, \ldots , q_\veps}_{r-w_\veps}, \underbrace{q_\veps + 1, \ldots , q_\veps +1}_{w_\veps}).
\end{equation}
Note that $\sum_{i=1}^{r} \eta_i = {\varepsilon}$%\sum_{i=1}^{n-r} \varepsilon_{i}$
. Therefore, Theorem \ref{thm.exist-dual-min-bases} guarantees that there exist dual minimal bases $M(\la)\in\bC[\la]^{(n-r)\times n}$
and $N(\la)\in\bC[\la]^{r\times n}$ with row degrees $(\veps_1, \veps_2, \dotsc, \veps_{n-r})$ and $(\eta_1, \eta_2, \dotsc, \eta_r)$, respectively. We take $U(\la) = N(\la)^\top$. Observe that, according to Proposition \ref{prop.dualyminind}, the left minimal indices of $U(\la)$ are precisely $(\veps_1, \veps_2, \dotsc, \veps_{n-r})$, or, equivalently, the right minimal indices of $U(\la)^\top$ are $(\veps_1, \veps_2, \dotsc, \veps_{n-r})$.

Next we construct $S(\la)$. Observe that, from (a) and \eqref{eq.eucldiv2},
\begin{equation}\label{eq.inequality}
0 \leq s + t + 2 w_\veps = r (d - 2 \, q_\veps) ,
\end{equation}
and, thus, $d - 2 \, q_\veps \geq 0$. We need to consider three different cases.

Case 1. $d - 2 \, q_\veps = 0$. According to \eqref{eq.inequality}, this condition implies
$s = t = w_\veps = 0$. Thus, in this case, there are no elementary divisors in the list \eqref{eq.Qeldiv} and the column degrees of $U(\la)$ are all equal to $q_\veps$. The desired matrix polynomial is $Q(\la) = U(\la) U(\la)^\top$ (i.e., with $S(\la) = I_r$), which has
\begin{itemize}
\item grade $2 q_\veps = d$,
\item rank equal to $r$ by Lemma \ref{lemm.trivial} and because $U(\la)\in\bC[\la]^{n\times r}$ has rank $r$,
\item left and right minimal indices equal to $(\veps_1, \veps_2, \dotsc, \veps_{n-r})$ by Lemma \ref{lemm.trivial} and because the left minimal indices of $U(\la)$ are $(\veps_1, \veps_2, \dotsc, \veps_{n-r})$,
\item no finite elementary divisors because $U(\la)$ and $U(\la)^\top$ can be completed to $n\times n$ unimodular matrices $V(\la)$ and $V(\la)^\top$ as a consequence of \eqref{eq.completion} and, so, $Q(\la) = V(\la) \, \diag (I_r, 0) \, V(\la)^\top$,
\item no infinite elementary divisors because, otherwise, $dr =  r 2 q_\veps = 2\, {\varepsilon}%\sum_{i=1}^{n-r} \varepsilon_{i}
< 2\,{\varepsilon}% \sum_{i=1}^{n-r} \varepsilon_{i}
 + \mbox{``Sum of the degrees of the infinite elementary divisors''}$ and the Index Sum Theorem would be violated.
\end{itemize}

Case 2. $d - 2 \, q_\veps = 1$. In this case, \eqref{eq.inequality} implies that $s+t = r - 2w_\veps$ and $d = 2 \, q_\veps + 1$. In this situation, it is more convenient to see \eqref{eq.degreesU} as
\begin{equation}\label{eq.degreesU2}
(\eta_1, \ldots , \eta_r) = (\underbrace{q_\veps, \ldots , q_\veps}_{r-2w_\veps}, \underbrace{q_\veps, \ldots , q_\veps}_{w_\veps}, \underbrace{q_\veps + 1, \ldots , q_\veps +1}_{w_\veps}).
\end{equation}
The matrix $S(\la)$ has the following structure
\begin{equation}\label{eq.particularS}
S(\la) = \begin{bmatrix}
           S^{(1)}(\la) & 0 & 0 \\
           0 & 0 & I_{w_\veps} \\
           0 & I_{w_\veps} & 0
         \end{bmatrix},
\end{equation}
where $S^{(1)}(\la) = \diag (S_{11} (\la), \ldots , S_{r - 2w_\veps,r- 2w_\veps} (\la)) \in \bC [\la]^{(r - 2w_\veps) \times (r - 2w_\veps)}$ is a diagonal matrix with exactly one of the elementary divisors in the list \eqref{eq.Qeldiv} in each of its diagonal entries. The matrix $S(\la)$ satisfies obviously the following properties:
\begin{enumerate}
  \item The polynomials $S_{11} (\la), \ldots, S_{r-2w_\veps , r-2w_\veps} (\la)$ have all degree $1$.
  \item As a consequence of \cite[Vol. I, Thm. 5, p. 142]{Gant59}, the finite elementary divisors of $S(\la)$ are precisely the linear scalar polynomials in \eqref{eq.Qeldiv}.
  \item $S(\la)$ is obviously nonsingular and, so, $\rank S = r$.
\end{enumerate}
If the columns of the matrix $U(\la) \in \bC [\la]^{n \times r}$ with column degrees given by the list \eqref{eq.degreesU2} are partitioned as follows
\[
\begin{array}{c}
U(\la) = \begin{bmatrix}
           U^{(1)} (\la) & U^{(2)} (\la) & U^{(3)} (\la)
\end{bmatrix} \\[-0.4cm]
\phantom{---...} \begin{array}{ccc} \!
\underbrace{\phantom{.---.}}_{r-2w_\veps} & \, \underbrace{\phantom{---...}}_{w_\veps} & \; \underbrace{\phantom{.---.}}_{w_\veps}
\end{array}
\end{array},
\]
then from \eqref{eq.factorformofQ}
\[
Q(\la) = U^{(1)}(\la) S^{(1)}(\la) U^{(1)}(\la)^\top + U^{(2)}(\la) U^{(3)}(\la)^\top + U^{(3)}(\la) U^{(2)}(\la)^\top,
\]
where each of the three {summands} has grade $2 \, q_\veps + 1 = d$. Thus $Q(\la)$ has
\begin{itemize}
\item grade $d$,
\item rank equal to $r$ by Lemma \ref{lemm.trivial} and because $U(\la)\in\bC[\la]^{n\times r}, S(\la)\in\bC[\la]^{r\times r}$ have both rank $r$,
\item left and right minimal indices equal to $(\veps_1, \veps_2, \dotsc, \veps_{n-r})$ by Lemma \ref{lemm.trivial} and because the left minimal indices of $U(\la)$ are $(\veps_1, \veps_2, \dotsc, \veps_{n-r})$,
\item the same finite elementary divisors as $S(\la)$ because $U(\la)$ and $U(\la)^\top$ can be completed to $n\times n$ unimodular matrices $V(\la)$ and $V(\la)^\top$ as a consequence of \eqref{eq.completion} and, so, $Q(\la) = V(\la) \, \diag (S(\la), 0) \, V(\la)^\top$,
\item no infinite elementary divisors because, otherwise, $dr =  s+t+2\, {\varepsilon}%\sum_{i=1}^{n-r} \varepsilon_{i}
 < s+t + 2\, {\varepsilon}%\sum_{i=1}^{n-r} \varepsilon_{i}
 + \mbox{``Sum of the degrees of the infinite elementary divisors''}$ and the Index Sum Theorem would be violated.
\end{itemize}

Case 3. $d - 2 \, q_\veps \geq 2$. Note that in this case the number of elementary divisors in the list \eqref{eq.Qeldiv} is, as a consequence of (a) and \eqref{eq.eucldiv2},
\begin{align*}
s + t & = rd - 2\, {\varepsilon}%\sum_{i=1}^{n-r} \varepsilon_{i}
 \\
      & = (r - w_\veps) (d- 2 \, q_\veps ) + w_\veps (d- 2 \, q_\veps -2).
\end{align*}
This means that we can arrange the $s+t$ elementary divisors in \eqref{eq.Qeldiv} into $r - w_\veps$ groups with $d- 2 \, q_\veps$ elementary divisors each, {together with} $w_\veps$ groups with $d- 2 \, q_\veps - 2$ elementary divisors each. Once this grouping is done we construct $S(\la)$ as a diagonal matrix that has in each of its first $r - w_\veps$ diagonal entries the product of the elementary divisors of each of the groups of $d- 2 \, q_\veps$ elementary divisors and in each of its last $w_\veps$ diagonal entries the product of the elementary divisors of each of the groups of $d- 2 \, q_\veps - 2$ elementary divisors. The matrix $S(\la) = \diag (S_{11} (\la), \ldots , S_{rr} (\la))$ constructed in this way satisfies the following properties:
\begin{enumerate}
  \item The polynomials $S_{11} (\la), \ldots, S_{r-w_\veps , r-w_\veps} (\la)$ have all degree $d- 2 \, q_\veps$ and the polynomials $S_{r-w_\veps+1,r-w_\veps+1} (\la), \ldots, S_{rr} (\la)$ have all degree $d- 2 \, q_\veps - 2$.
  \item As a consequence of \cite[Vol. I, Thm. 5, p. 142]{Gant59} and the fact that the linear finite elementary divisors in the list \eqref{eq.Qeldiv} are all different from each other, the finite elementary divisors of $S(\la)$ are precisely the linear scalar polynomials in \eqref{eq.Qeldiv}.
  \item $S(\la)$ is obviously nonsingular and, so, $\rank S = r$.
\end{enumerate}
Then the matrix $Q(\la)$ in \eqref{eq.factorformofQ} can be written in this case as
\[
Q(\la) = {S_{11} (\la)}\,\mbox{col}_1 (U) \,  \mbox{col}_1 (U)^\top + \cdots +{ S_{rr} (\la) \,}
\mbox{col}_r (U) \, \mbox{col}_r (U)^\top,
\]
where each of the terms $ S_{ii} (\la)\, \mbox{col}_i (U) \,\mbox{col}_i (U)^\top$ has degree exactly $d$ as a consequence of the discussion above and \eqref{eq.degreesU}. Thus $Q(\la)$ has
\begin{itemize}
\item grade $d$,
\item rank equal to $r$ by Lemma \ref{lemm.trivial} and because $U(\la)\in\bC[\la]^{n\times r}, S(\la)\in\bC[\la]^{r\times r}$ have both rank $r$,
\item left and right minimal indices equal to $(\veps_1, \veps_2, \dotsc, \veps_{n-r})$ by Lemma \ref{lemm.trivial} and because the left minimal indices of $U(\la)$ are $(\veps_1, \veps_2, \dotsc, \veps_{n-r})$,
\item the same finite elementary divisors as $S(\la)$, because $U(\la)$ and $U(\la)^\top$ can be completed to $n\times n$ unimodular matrices $V(\la)$ and $V(\la)^\top$, as a consequence of \eqref{eq.completion}, and, so, $Q(\la) = V(\la) \, \diag (S(\la), 0) \, V(\la)^\top$,
\item no infinite elementary divisors because, otherwise, $dr =  s+t+2\,{\varepsilon}% \sum_{i=1}^{n-r} \varepsilon_{i}
 < s+t + 2\,{\varepsilon}% \sum_{i=1}^{n-r} \varepsilon_{i}
+ \mbox{``Sum of the degrees of the infinite elementary divisors''}$ and the Index Sum Theorem would be violated.
\end{itemize}
Finally, we prove that, if (a) holds, then the degree of $P(\lambda)$ is exactly equal to $d$ if and only if $t < r$. We have proved the existence of an $n\times n$ symmetric matrix polynomial of rank $r$ and grade $d$
$$
P(\la) = P_d \la^d + \cdots + P_1 \la + P_0
$$
with the prescribed eigenstructure. Note that
$$
(\mathrm{rev} P) (\la) = P_0 \la^d + \cdots + P_{d-1} \la + P_d.
$$
Let
$$
(\mathrm{rev} P) (\la) = {V}(\la) \diag(p_1 (\la), \ldots , p_r (\la), 0_{(n-r) \times (n-r)}) {W}(\la)
$$
be the Smith form of $(\mathrm{rev} P) (\la)$ with ${V}(\la)$ and ${W}(\la)$ unimodular matrices. Then, $P_d = (\mathrm{rev} P) (0) = {V}(0) \diag(p_1 (0), \ldots , p_r (0), 0_{(n-r) \times (n-r)}) {W}(0) \ne 0$ if and only if $p_i (0) \ne 0$ for at least one $i$, i.e., if and only if $t < r$.
\end{proof}

%------------------------------------------------------------------------------------------

\section{Symmetric linearization of symmetric matrix polynomials}\label{sect.linz}

A matrix pencil ${\cal F}_P{(\lambda)}$ is called a {\it linearization} of a matrix polynomial $P(\lambda)$ if they have the same finite elementary divisors, the same number of left minimal indices, and the same number of right minimal indices \cite{DeDM14}. If, in addition, $P(\la)$ is considered as a polynomial of grade $d$ (recall that, by definition, matrix pencils have grade $1$) and $\rev {\cal F}_P{(\lambda)}$ is a linearization of $\rev P(\lambda)$, then ${\cal F}_P{(\lambda)}$ is called a {\it strong linearization} of $P(\lambda)$ and, then, ${\cal F}_P{(\lambda)}$ and $P(\lambda)$ have also the same infinite elementary divisors. Note that the notion of strong linearization depends on the grade $d$ chosen for the polynomial $P(\la)$ through the definition of $\rev P(\lambda)$. In this sense, we emphasize that the linearizations we introduce in this section will be strong linearizations for any matrix polynomial when it is considered as a polynomial with any odd grade $d$ larger than or equal to its degree.
%Linearizations are powerful tools for investigation of matrix polynomials \cite{DeDM14,GoLR09}.

From now on we restrict ourselves to symmetric matrix polynomials of odd grades. The reason is that there is no symmetric linearization-template (i.e., a symmetric companion form in the language of \cite[Sects. 5 and 7]{DeDM14}) for symmetric matrix polynomials of even grades \cite{DeDM14,MMMM13}. As in the rest of the paper, we also restrict ourselves to considering polynomials over the field of complex numbers $\mathbb C$, though some of the results in this section remain valid over arbitrary fields.

We recall the following pencil-template, ${\cal F}_P(\lambda)$, which is a symmetric strong linearization {for any} symmetric $n\times n$ matrix polynomial $P(\lambda) = \la^d A_d + \cdots + \la A_1 + A_0$ of odd grade $d$, see e.g., \cite{AnVo04,MMMM10,MMMM13}. It is a $d\times d$ block-partitioned pencil with blocks of size $n\times n$. Therefore, ${\cal F}_P(j,k)$ below denotes the $(j,k)$ block in this partition, for $j,k=1,\dots,d$ (we drop the dependence on $\la$ for simplicity).
\begin{align*}
{\cal F}_P(i,i)&=\begin{cases}
\lambda A_{d-i+1} + A_{d-i} & \text{if } i \text{ is odd,}\\
0 & \text{if } i \text{ is even,}\\
\end{cases}\\
{\cal F}_P(i,i+1)&=\begin{cases}
 I_n & \text{if } i \text{ is odd,}\\
 \lambda I_n & \text{if } i \text{ is even,}\\
\end{cases} \quad
{\cal F}_P(i+1,i)=\begin{cases}
I_n & \text{if } i \text{ is odd,}\\
\lambda I_n & \text{if } i \text{ is even.}\\
\end{cases}
\end{align*}
%where ${\cal F}_P{(\lambda)}(j,k)$ denotes an $n \times n$ matrix pencil which is at the position $(j,k)$ of the block pencil ${\cal F}_P{(\lambda)}$ and  $j,k=1, \dots , d$.
The blocks of ${\cal F}_P{(\lambda)}$ in positions which are not specified above are zero. We rewrite this strong linearization template in  a matrix form:
\hide{
\begin{equation}
\label{linform}
{\cal F}_P{(\lambda)}=
\begin{bmatrix}
\lambda A_d + A_{d-1}& -I &&&&&\\
I &0&-\lambda I&&&&\\
&\lambda I &\ddots&\ddots&&&\\
&&\ddots&0&-\lambda I&&\\
&&&\lambda I&\lambda A_{3}+A_{2}&-I&\\
&&&&I&0&-\lambda I\\
&&&&&\lambda I&\lambda A_1+ A_0\\
\end{bmatrix}
\end{equation}
or }
\begin{equation}
\label{linform}
{\cal F}_P{(\lambda)}=
\lambda
\begin{bmatrix}
A_d&&&&&\\
&\ddots&\ddots&&&\\
&\ddots&0&I&&\\
&&I&A_{3}&&\\
&&&&0&I\\
&&&&I&A_1\\
\end{bmatrix} + \begin{bmatrix}
A_{d-1}&I&&&&\\
I&0&\ddots&&&\\
&\ddots&\ddots&&&\\
&&&A_{2}&I&\\
&&&I&0&\\
&&&&&A_0\\
\end{bmatrix}.
\end{equation}
For a symmetric matrix polynomial {$P(\la)$}, {the} strong linearization \eqref{linform} preserves {both} finite and infinite elementary divisors of $P(\lambda)$, but does not preserve the left and right minimal indices of $P(\lambda)$. Nevertheless, the results {from} \cite{DeDM10, DeDM12, Dmyt15} {allow us to derive} the relation between the minimal indices of a symmetric matrix polynomial $P(\lambda)$ and its linearization \eqref{linform}{, which is} presented in the following theorem.

\begin{theorem} \label{skewlin}
Let $P(\lambda)$ be a symmetric $n\times n$ matrix polynomial of odd grade $d \ge 3$, and let ${\cal F}_P{(\lambda)}$ be its strong linearization \eqref{linform}. If $0 \le  \varepsilon_1 \le \varepsilon_2 \le~\dots \le \varepsilon_t$ are the right ({and also }left) minimal indices of $P(\lambda)$, then
$$0 \le  \varepsilon_1 + \frac{1}{2}(d-1) \le \varepsilon_2 + \frac{1}{2}(d-1) \le  \cdots \le \varepsilon_t + \frac{1}{2}(d-1)$$
are the right ({and also} left) minimal indices of ${\cal F}_P{(\lambda)}$.
\end{theorem}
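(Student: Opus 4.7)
The plan is to combine three ingredients: the symmetry of $\mathcal{F}_P(\lambda)$ itself, a uniform ``shift'' rule for minimal indices of block-companion linearizations from \cite{DeDM10,DeDM12,Dmyt15}, and the Index Sum Theorem to pin down the total size of the shift.

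First, I would observe by direct inspection of \eqref{linform} that $\mathcal{F}_P(\lambda)^\top=\mathcal{F}_P(\lambda)$, so the pencil is symmetric. For any symmetric pencil the left null-space over $\mathbb{C}(\lambda)$ consists exactly of the transposes of the vectors in the right null-space; consequently, its left and right minimal indices coincide. This reduces the theorem to computing, say, the right minimal indices of $\mathcal{F}_P(\lambda)$ in terms of those of $P(\lambda)$.

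Second, I would invoke the results of \cite{DeDM10,DeDM12,Dmyt15} about minimal indices of block-companion-type strong linearizations, which apply to the template \eqref{linform}. These results guarantee the existence of nonnegative integers $\ell,\,r_{\rm sh}$ that depend only on the template (not on $P$) such that the left (respectively right) minimal indices of $\mathcal{F}_P(\lambda)$ are obtained by adding $\ell$ (respectively $r_{\rm sh}$) to each left (respectively right) minimal index of $P(\lambda)$. Applied to the symmetric $P(\lambda)$, whose own left and right minimal indices agree ($\eta_i=\varepsilon_i$), and combined with the equality of left and right minimal indices of $\mathcal{F}_P(\lambda)$ from the first step, this forces $\ell=r_{\rm sh}$.

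Third, I would fix the value of this common shift by a counting argument. Since $\mathcal{F}_P(\lambda)$ is a strong linearization of $P(\lambda)$, the two share the same finite and infinite elementary divisors; write $M$ for the sum of all partial multiplicities. Let $r_P$ be the rank of $P(\lambda)$, so that $\mathcal{F}_P(\lambda)$ has size $nd\times nd$ and rank $n(d-1)+r_P$, and both polynomials have $n-r_P$ minimal indices of each kind. The Index Sum Theorem \cite[Thm.~6.5]{DeDM14} applied to $P(\lambda)$ gives $M + 2\sum_i\varepsilon_i = r_P d$, and applied to $\mathcal{F}_P(\lambda)$ gives $M + \sum_i(\varepsilon_i+\ell) + \sum_i(\varepsilon_i+r_{\rm sh}) = n(d-1)+r_P$. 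Subtracting yields $(n-r_P)(\ell+r_{\rm sh})=(n-r_P)(d-1)$, hence $\ell+r_{\rm sh}=d-1$. Combined with $\ell=r_{\rm sh}$, I conclude $\ell=r_{\rm sh}=(d-1)/2$, which is an integer because $d$ is odd, exactly the shift asserted in the theorem.

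The main obstacle is the second step: the cited references must actually apply to the specific template \eqref{linform}, guaranteeing that the per-index shift is a single constant independent of $i$ rather than a more intricate permutation-plus-shift. A fully self-contained alternative would be to construct, from any minimal basis $\{x_1(\lambda),\dots,x_{n-r_P}(\lambda)\}$ of the right null-space of $P(\lambda)$, an explicit minimal basis of the right null-space of $\mathcal{F}_P(\lambda)$ whose $i$-th vector is a block-column of length $d$ with blocks of the form $\pm\lambda^{\alpha_j}x_i(\lambda)$, chosen so that the block-tridiagonal structure of \eqref{linform} annihilates it and the total degree is exactly $\varepsilon_i+(d-1)/2$; minimality would then follow from the standard Forney criterion applied blockwise.
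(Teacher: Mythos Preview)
Your proposal is correct. The paper's own proof is a one-line citation to Theorem~4.1 of \cite{Dmyt15} (with \cite{DeDM10,DeDM12} as background), which already gives the explicit shift $(d-1)/2$ for this block-tridiagonal template; the symmetric case differs from the skew-symmetric one treated there only in the signs of some identity blocks, which do not affect the null-space construction. Your route is a genuine variation: rather than reading the value $(d-1)/2$ directly from those references, you extract from them only the weaker qualitative statement that a uniform per-index shift exists, then use the symmetry of $\mathcal{F}_P(\lambda)$ to force the left and right shifts to coincide, and finally recover the common value via the Index Sum Theorem. This is a pleasant robustness check---it shows the shift is forced by bookkeeping alone once uniformity is granted---but it is extra work compared to the paper's direct citation, and your own caveat about step two is precisely the point at which both arguments rest on the same external input.
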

\begin{proof}
See the proof of Theorem 4.1 in \cite{Dmyt15}, {and} also \cite{DeDM10, DeDM12} for such results for general matrix polynomials.
\end{proof}

The strong linearization ${\cal F}_P{(\lambda)}$ in \eqref{linform} is crucial for obtaining the results in Section~\ref{sec.main}. {To this end,} we define the {\it generalized Sylvester space} consisting of the linearizations ${\cal F}_P{(\lambda)}$ of all the $n \times n$ symmetric matrix polynomials of odd grade $d$:
\begin{equation}\label{gsyl}
\begin{aligned}
\GSYL^{s}_{d,n\times n}{:}= \{ {\cal F}_P{(\lambda)}  \ : P(\lambda) &\text{ {is an} } n \times n \text{ symmetric } \\
&\text{matrix polynomial of odd grade } d \}.
\end{aligned}
\end{equation}
If there is no risk of confusion, we will write $\GSYL$ instead of $\GSYL^{s}_{d, n\times n}$, as well as $\POL$ instead of $\POL_{d, n\times n}^{s}$, especially  in  explanations and proofs.

%Denote the vector space of the $n \times n$ symmetric matrix polynomials of the  grade $d$ by $\POL_{d, n\times n}^{s}$.
%Observe that $\POL_{1, n\times n}^{ss}$ is $\PEN_{n\times n}^{ss}$.

As we did with matrix pencils in Section \ref{sect.prempencils}, we define the distance between $P(\lambda)=\sum_{i=0}^d \lambda^i A_i$ and $\widetilde P(\lambda)=\sum_{i=0}^d \lambda^i \widetilde A_i$ on $\POL_{d, n\times n}^{s}$ as $d(P,\widetilde P) = \sqrt{ \sum_{i=0}^d || A_i - \widetilde A_i ||_F^2}$, making $\POL_{d, n\times n}^{s}$ a metric space  with the induced Euclidean topology and allowing us to consider closures of subsets of $\POL_{d, n\times n}^{s}$. For convenience,  define the Frobenius norm of the matrix polynomial $P(\la)$ as $||P(\lambda)||_F = \sqrt{\sum_{i=0}^d || A_i ||_F^2 }$.

Similarly, given the linearizations ${\cal F}_P{(\lambda)} = \lambda A {+}B$ and ${\cal F}_{\widetilde P}{(\lambda)} = \lambda \widetilde A {+} \widetilde B$, the function $d({\cal F}_P,{\cal F}_{\widetilde P}):= \sqrt {||A-\widetilde A||_F^2 + ||B-\widetilde B||_F^2}$
%mentioned in Sections \ref{pencils} and \ref{sec.main1}
is a distance on $\GSYL${, which} makes $\GSYL$ a metric space. Since $d({\cal F}_P, {\cal F}_{\widetilde P}) = d(P,\widetilde P)$, there is a bijective isometry (and, therefore, a homeomorphism):
\begin{equation}\label{homeo}
f: \POL^{s}_{d, n\times n} \rightarrow \GSYL^{s}_{d, n\times n} \quad \text{such that} \quad f(P) = {\cal F}_{P}.
\end{equation}
Next we define the orbit and bundle of the symmetric linearizations \eqref{linform} of a{n $n\times n$} symmetric matrix polynomial $P$ {of grade $d$}:
\begin{equation}\label{linorb}
\begin{aligned}
\orb^{\rm{syl}}({\cal F}_{P}) &{:}= \orb^c({\cal F}_{P})  \cap  \GSYL^{s}_{d, n\times n} \\
&= \{(S^{\top}{\cal F}_P{(\lambda)} S) \in \GSYL^{s}_{d, n\times n} \ : \ S \in  GL_{nd}(\mathbb C) \} \quad \text{ and } \\
%\end{equation}
%\begin{equation}\label{linbun}
\bun^{\rm{syl}}({\cal F}_{P}) &{:}= \bun^c({\cal F}_{P})  \cap  \GSYL^{s}_{d, n\times n}.
%\{(S^{\top}{\cal F}_{P(\lambda)} S) \in \GSYL^{s}_{d, n\times n} \ : \ S \in  GL_{md}(\mathbb C) \}.
\end{aligned}
\end{equation}
Notably, all the elements of $\orb^{\rm{syl}}({\cal F}_{P})$ and $\bun^{\rm{syl}}({\cal F}_{P})$ have the block structure of the elements of $\GSYL^{s}_{d, n\times n}$. Thus, in particular, $\bun^{ s}(P) = f^{-1}(\bun^{\rm{syl}}({\cal F}_{P}))$, {with $f$ being the homeomorphism (isometry) in \eqref{homeo},} as a consequence of the properties of strong linearizations and Theorem \ref{skewlin}, and $\overline{\bun^{ s}}(P) = f^{-1}\left(\overline{\bun^{\rm{syl}}}({\cal F}_{P})\right)$, as a consequence of $f$ being a homeomorphism. Moreover, for any $n \times n$ symmetric matrix polynomials $P, Q$ of odd grade $d$,
{\begin{equation}\label{equivalence}
\overline{\bun^s}(P) \supseteq \overline{\bun^s}(Q)\quad\mbox{ if and only if}\quad \overline{\bun^{\rm{syl}}}({\cal F}_{P}) \supseteq \overline{\bun^{\rm{syl}}}({\cal F}_{Q}),
\end{equation}}where it is essential to note that the closures are taken{, respectively,} in the metric spaces $\POL$ and $\GSYL$ {defined above}.
%Note that, similarly to the matrix pencil case, $\orb({\cal F}_{P})$ is open in its closure in the relative Euclidean topology, and so is $\orb(P)$ since $f$ is a homeomorphism.

The following theorem states that, for any $n\times n$ symmetric matrix polynomial $P(\lambda)$ of odd grade $d$, a sufficiently small arbitrary symmetric perturbation of ${\cal F}_P{(\lambda)}$ produces another pencil that, although, in general, is not in $\GSYL^{s}_{d, n\times n}$, is congruent to a pencil in $\GSYL^{s}_{d, n\times n}$ that is very close to ${\cal F}_P{(\lambda)}$. See \cite{Dmyt15, DLPVD15, DPVD16, JoKV13, VaDe83} for this type of results for linearizations of general, skew-symmetric, full rank matrix polynomials, and other structured classes of matrix polynomials and linearizations.

\begin{theorem}\label{deform}
Let $P(\lambda)$ be an $n \times n$ symmetric  matrix polynomial with odd grade $d$ and let ${\cal F}_P{(\lambda)}$ be its symmetric linearization~\eqref{linform}. If ${\mathcal E} (\la)$ is any arbitrarily small (entrywise for each coefficient) $nd \times nd$ symmetric pencil, then there exist a nonsingular matrix $C$ and an arbitrarily small (entrywise for each coefficient) $n \times n$ symmetric matrix polynomial $S(\lambda)$ with grade $d$ such that
\begin{equation*}
\label{eq19}
C^\top ({\cal F}_P{(\lambda)} +{\mathcal E} (\la)) C = {\cal F}_{P+S}{(\lambda)} \,.
\end{equation*}
\end{theorem}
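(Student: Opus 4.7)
The plan is to construct the congruence matrix $C$ and the symmetric polynomial $S(\la)$ by combining a block-by-block elimination on the linearization with the implicit function theorem. The key structural feature of~\eqref{linform} that I exploit is that every off-diagonal block of ${\cal F}_P(\la)$ is $I_n$ or $\la I_n$ (always invertible, hence usable as a ``pivot'' for congruence), every even diagonal block is zero, and every odd diagonal block contains two coefficients of $P(\la)$ and is the only place where those coefficients appear. Consequently the odd diagonal blocks are the only block positions that are ``free'' and can absorb the perturbations introduced by $\mathcal{E}(\la)$; what they absorb defines the coefficients of $S(\la)$, while perturbations at all other block positions must be eliminated by the congruence.

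Writing $\mathcal{E}(\la)=[\mathcal{E}_{ij}(\la)]$ as a $d\times d$ symmetric array of $n\times n$ pencil blocks and seeking $C = I_{nd} + \Delta$ with $\Delta$ a small constant (in $\la$) matrix, the identity $C^\top({\cal F}_P+\mathcal{E})C = {\cal F}_{P+S}$ reduces, to first order, to the linear equation
\begin{equation*}
\Delta^\top{\cal F}_P + {\cal F}_P\Delta + \mathcal{E} = {\cal F}_S
\end{equation*}
in the unknowns $\Delta \in \bC^{nd\times nd}$ and $S\in\POL^s_{d,n\times n}$. First I would show this linearized problem is solvable for every symmetric pencil $\mathcal{E}$ by a block elimination that proceeds outward-to-inward: the identity pivots on the off-diagonals allow one to choose block entries of $\Delta$ that successively kill the perturbations at the far-off-diagonal positions $|i-j|\ge 2$, then the wrong-degree components of the $I_n$ and $\la I_n$ off-diagonal blocks, and finally the even diagonal blocks. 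Whatever remains on the odd diagonal blocks has the required pencil form $\la\widetilde A_{d-i+1}+\widetilde A_{d-i}$ and defines the symmetric coefficients of $S(\la)$; the symmetry of each congruence step ensures these remainders are symmetric and assemble into a bona fide element of $\POL^s_{d,n\times n}$. The implicit function theorem applied to the smooth map $(C,S)\mapsto C^\top({\cal F}_P+\mathcal{E})C - {\cal F}_{P+S}$ at $(I,0)$ then upgrades the linearized solution to the full nonlinear one, with $\|C-I\|,\|S\|_F = O(\|\mathcal{E}\|_F)$ and $C$ automatically nonsingular for small $\mathcal{E}$.

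The main obstacle is the combinatorial bookkeeping in the block-elimination step: one must verify that each structurally constrained block position is actually cleared by the proposed $\Delta$, that the elimination order does not re-introduce perturbations at already-cleared positions, and that the local residuals accumulated on the odd diagonal blocks are independent across blocks (so that they define the coefficients of $P+S$ without any cross-block consistency constraint, using the fact that each coefficient $A_j$ appears in only one block of ${\cal F}_P$). Equally important, every step must preserve the two-sided symmetric form $\Delta^\top(\cdot)+(\cdot)\Delta$, and the check is that this two-sidedness does not obstruct the simultaneous clearing of all structural blocks. Analogous block-elimination constructions for other structured linearization-templates are given in \cite{Dmyt15,DLPVD15,DPVD16,JoKV13,VaDe83}; the skew-symmetric case in \cite{Dmyt15} is the closest template, and the present symmetric version should follow after the appropriate sign changes and symmetry adjustments.
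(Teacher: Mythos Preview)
Your proposal is correct and follows essentially the same approach as the paper: the paper's proof simply observes that the block-elimination reduction carried out for the skew-symmetric template in \cite[Thm.~5.1]{Dmyt15} goes through verbatim for the symmetric linearization~\eqref{linform} (since only the signs of the identity blocks differ and these are irrelevant to the pivoting), and points to \cite[Thm.~6.13]{DPVD16} as an alternative route. Your sketch of the linearized equation $\Delta^\top{\cal F}_P + {\cal F}_P\Delta + \mathcal{E} = {\cal F}_S$, its solvability by outward-to-inward block elimination using the $I_n$ and $\la I_n$ pivots, and the upgrade via the implicit function theorem is precisely the content of those references specialized to the present template.
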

\begin{proof}
The linearization ${\cal F}_P(\lambda)$ defined in \eqref{linform} differs from the skew-symmetric linearization of skew-symmetric polynomials considered in \cite{Dmyt15} only in the signs of some identity blocks and the fact that the matrix coefficients of the polynomials are symmetric.
Since the signs of the identity blocks do not affect the reduction process in the proof of Theorem 5.1 in \cite{Dmyt15}, we can repeat this process for the symmetric linearization ${\cal F}_P(\lambda)$ \eqref{linform} (note that the matrix coefficients are not used in the reduction process at all). Thus Theorem 5.1 in \cite{Dmyt15} remains true for the symmetric linearization ${\cal F}_P(\lambda)$ \eqref{linform} of the symmetric polynomials. Therefore (see also Theorem 7.1 in \cite{Dmyt15})  each sufficiently small symmetric perturbation of the linearization of a symmetric $n \times n$ matrix polynomial ${\cal F}_P(\lambda) +{\mathcal E} (\la)$ can be smoothly reduced by congruence to another one in which only the matrix coefficients are perturbed.
%See the proof of Theorems 5.1 and 7.1 in \cite{Dmyt15}.

Note also that this result can be seen as a corollary of Theorem 6.13 in \cite{DPVD16}.
\end{proof}

\section{Generic symmetric matrix polynomials with bounded rank and fixed odd grade}
\label{sec.main}

In this section we describe the generic complete eigenstructures of $n\times n$  complex symmetric matrix polynomials with bounded rank $r$ and odd grade $d$. {The idea to obtain these generic eigenstructures is the following: Theorem \ref{th:improvedDeDo} displays the generic eigenstructures of symmetric pencils with bounded rank. We want to use this information to get the generic eigenstructures of symmetric matrix polynomials with bounded rank and fixed odd grade using that any such polynomial $P(\la)$ has a symmetric strong linearization ${\cal F}_P(\la)$ as in \eqref{linform}. Theorem \ref{deform} allows us to perform this transition. According to this result, any sufficiently small symmetric perturbation of such symmetric linearization of any symmetric polynomial is congruent to the linearization \eqref{linform} of a small symmetric perturbation of the polynomial. Roughly speaking, this means that small perturbations in the polynomial can be identified with small perturbations in the linearization and vice versa (up to congruence, which is enough from the point of view of the eigenstructure). However, it is essential to take into account that not every generic eigenstructure in Theorem \ref{th:improvedDeDo} corresponds to the eigenstructure of the linearization \eqref{linform} of a symmetric matrix polynomial of grade $d$. Lemma \ref{lem.eiglinz} identifies which ones are compatible with such polynomials, and which ones are not. We warn the reader that the dependence on $\la$ of matrix pencils and polynomials is often omitted for simplicity in the proofs in this section.

\begin{lemma}\label{lem.eiglinz}
Let $n,r$, and $d$ be integers such that $n\geq2$, $d\geq1$ is odd, and $1\leq r\leq n-1$. Set  $r_1:=n(d-1)+r$ and let ${\cal K}_{a_1} (\lambda)$, for $a_1=0,1,\ldots,\lfloor\frac{r_1}{2}\rfloor\,$, be anyone of the symmetric $nd \times nd$ pencils with rank $r_1$ in Theorem {\rm\ref{th:improvedDeDo}}, i.e., taking any possible choice of distinct eigenvalues. Define $\lfloor \frac{rd}{2} \rfloor + 1$ complete eigenstructures ${\mathbf K}_a$ of matrix polynomials with $rd-2a$ linear elementary divisors corresponding to arbitrary (finite) distinct eigenvalues, with $s$ left minimal indices equal to $\alpha +1$, $n-r-s$ another left minimal indices equal to $\alpha$, and with the right minimal indices equal to the left minimal indices, as follows:
\begin{equation}
\label{kcilist}
{\mathbf K}_a: \bigg\{\overbrace{\underbrace{\alpha+1, \dots , \alpha+1}_{s},\underbrace{\alpha, \dots , \alpha}_{n-r-s}}^{\text{left minimal indices}}, \overbrace{\underbrace{\alpha+1, \dots , \alpha+1}_{s}, \underbrace{\alpha, \dots ,\alpha}_{n-r-s}}^{\text{right minimal indices}}, (\lambda - \mu_1), \dots, (\lambda - \mu_{rd-2a}) \bigg\}
\end{equation}
for $a = 0,1,\dots,\lfloor \frac{rd}{2} \rfloor$, where $\alpha = \lfloor a / (n-r) \rfloor$ and $s = a  \mod (n-r)$. Then
\begin{itemize}
\item[{\rm(i)}] For each $a=0,1,\dots,\lfloor\frac{rd}{2}\rfloor$ and for any choice of distinct eigenvalues $\mu_1, \ldots , \mu_{rd-2a}$, there is an $n \times n$ complex symmetric matrix polynomial $K_a (\la)$ of degree exactly $d$ and rank exactly $r$ with complete eigenstructure ${\mathbf K}_a$.
\item[{\rm (ii)}] If $a_1<\frac{1}{2}(n-r)(d-1)$, then ${\cal K}_{a_1}(\la)$ is not congruent to the linearization \eqref{linform} of any $n\times n$ symmetric matrix polynomial of grade $d$.
\item[{\rm (iii)}] If $a_1\geq \frac{1}{2}(n-r)(d-1)$, then $\bun^c ({\cal K}_{a_1}) = \bun^c ({\cal F}_{K_a})$, with $a=a_1-\frac{1}{2}(n-r)(d-1)$.
\end{itemize}
\end{lemma}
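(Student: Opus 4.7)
The plan is to handle the three parts separately, using ingredients already in place: the existence result (Theorem \ref{thm.existence}), the minimal index shift under linearization (Theorem \ref{skewlin}), the generic eigenstructures of symmetric pencils (Theorem \ref{th:improvedDeDo}), and the symmetric canonical form under congruence (Theorem \ref{kron}).

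For part (i), I would apply Theorem \ref{thm.existence} directly with the following inputs: the $rd-2a$ distinct linear polynomials $(\la-\mu_1),\dots,(\la-\mu_{rd-2a})$ as the list of monic scalar polynomials of degree~$1$; $t=0$ (no infinite elementary divisors); and the prescribed list of right minimal indices consisting of $s$ copies of $\alpha+1$ and $n-r-s$ copies of $\alpha$. Using the Euclidean division $a=(n-r)\alpha+s$, the sole hypothesis (a) of Theorem \ref{thm.existence} reads $(rd-2a)+0+2a=rd$, which is automatic. The theorem then yields a symmetric matrix polynomial $K_a(\la)$ of rank $r$ and grade $d$ with complete eigenstructure exactly ${\mathbf K}_a$; since $t=0<r$, the last clause of Theorem \ref{thm.existence} gives that the degree of $K_a$ is exactly $d$.

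For part (ii), the key observation is that Theorem \ref{skewlin} forces every minimal index of the linearization \eqref{linform} of any $n\times n$ symmetric matrix polynomial of odd grade $d$ to be at least $(d-1)/2$. On the other hand, writing the Euclidean division $a_1=(n-r)\alpha_1+s_1$ with $0\le s_1<n-r$, the pencil ${\cal K}_{a_1}$ carries $n-r-s_1\ge 1$ minimal indices equal to $\alpha_1=\lfloor a_1/(n-r)\rfloor$ by its construction in Theorem \ref{th:improvedDeDo}. When $a_1<(n-r)(d-1)/2$, we have $\alpha_1<(d-1)/2$, so ${\cal K}_{a_1}$ has a minimal index strictly below the lower bound above; since congruence preserves the symmetric Kronecker canonical form, ${\cal K}_{a_1}$ cannot be congruent to any pencil of the form \eqref{linform}. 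The hypothesis is vacuous when $d=1$.

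For part (iii), the strategy is to compute the symmetric Kronecker canonical form of ${\cal F}_{K_a}$ explicitly and match it with that of ${\cal K}_{a_1}$. Being a strong linearization of $K_a$, ${\cal F}_{K_a}$ inherits the $rd-2a$ distinct finite linear elementary divisors of $K_a$ and has no infinite elementary divisors (because $K_a$ has exact degree $d$ from part (i)). Theorem \ref{skewlin} then yields its minimal indices as $s$ copies of $(\alpha+1)+(d-1)/2$ and $n-r-s$ copies of $\alpha+(d-1)/2$. A direct check with $a_1:=a+(n-r)(d-1)/2$ shows that the Euclidean division of $a_1$ by $n-r$ produces quotient $\alpha+(d-1)/2$ and remainder $s$, and moreover $r_1-2a_1=rd-2a$. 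Therefore, by Theorem \ref{kron}, ${\cal F}_{K_a}$ and ${\cal K}_{a_1}$ have identical symmetric canonical block structures; because $\bun^c$ is by definition insensitive to the particular values of the distinct eigenvalues, the two congruence bundles coincide.

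The conceptual engine is the existence result Theorem \ref{thm.existence}, which powers part (i); the remaining work is essentially bookkeeping, and the only point that requires care is keeping the shift $(d-1)/2$ of the minimal indices under the linearization \eqref{linform} straight when comparing the Euclidean divisions defining ${\cal K}_{a_1}$ and ${\mathbf K}_a$.
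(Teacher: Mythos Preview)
Your proof is correct and follows essentially the same approach as the paper: part (i) via Theorem \ref{thm.existence} with $t=0$, part (ii) via the lower bound $(d-1)/2$ on the minimal indices of $\mathcal{F}_P$ from Theorem \ref{skewlin}, and part (iii) by matching the Euclidean divisions of $a$ and $a_1$ by $n-r$. One small imprecision in part (iii): the absence of infinite elementary divisors for $K_a$ (and hence for $\mathcal{F}_{K_a}$) follows from the fact that its complete eigenstructure is $\mathbf{K}_a$, which was built with $t=0$ in part (i); having degree exactly $d$ alone does not preclude infinite elementary divisors, since the leading coefficient could still be rank-deficient.
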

\begin{proof}
(i) Summing up the number of elementary divisors and all the minimal indices for each ${ {\mathbf K}_a}$ in \eqref{kcilist} we have
\begin{align*}
&rd-2a + 2\left(\sum_1^s (\alpha +1) + \sum_1^{n-r-s} \alpha \right)
=rd-2a + 2\left( \sum_1^{n-r} \alpha + s \right)  \\
&= rd-2a + 2\left(  (n-r) \lfloor a / (n-r) \rfloor + s \right) = rd-2a +2a = rd.
\end{align*}
By Theorem \ref{thm.existence}, for each $a$ and for any list $\mu_1, \ldots , \mu_{rd - 2a}$ of distinct finite eigenvalues, there exists an $n\times n$ complex symmetric matrix polynomial $K_a$ of grade $d$ and rank exactly $r$ that has the complete eigenstructure~${ {\mathbf K}_a}$ {in} \eqref{kcilist}. Note that, since ${\mathbf K}_a$ does not contain infinite eigenvalues ($t=0$), the degree of $K_a$ is exactly $d$.

(ii) Set $n_1=nd$ and note, first, that $n-r=n_1-r_1$ (this identity will be used several times throughout the rest of the proof). Now, for $a_1=0,1,\dots,\lfloor\frac{r_1}{2}\rfloor$, the generic canonical forms of the symmetric matrix pencils of rank at most $r_1=n(d-1)+r$ and size $n_1 \times n_1$ given in Theorem \ref{th:improvedDeDo} are:
\begin{equation} \label{pg}
{\cal K}_{a_1}=\diag \left(\underbrace{{\cal M}_{\alpha_1+1}, \dots , {\cal M}_{\alpha_1+1}}_{s_1},\underbrace{{\cal M}_{\alpha_1}, \dots , {\cal M}_{\alpha_1}}_{n_1-r_1-s_1},
{\cal J}_{1}(\tilde{\mu}_1), \dots , {\cal J}_{1}(\tilde{\mu}_{r_1-2a_1}) \right),
\end{equation}
where $\alpha_1=\lfloor\frac{a_1}{n_1-r_1}\rfloor=\lfloor\frac{a_1}{n-r}\rfloor$ and $s_1 = a_1 \mod (n_1 - r_1)$. Therefore, if $a_1 <  \frac{1}{2}(n-r)(d-1)$, then the ${\cal M}_{\alpha_1}$ blocks correspond to minimal indices smaller than $(d-1)/2$. As a consequence of Theorem \ref{skewlin}, they cannot correspond to the linearization {\eqref{linform}} of any $n\times n$ symmetric matrix polynomial of grade $d$.

(iii) For each matrix polynomial $K_a$ in part (i) the $nd \times nd$ symmetric matrix pencil ${\cal F}_{K_a}$ has rank $n(d-1)+r$ and, by Theorem \ref{skewlin}, the Kronecker-type symmetric canonical form in Theorem \ref{kron} of ${\cal F}_{K_a}$ is
\begin{multline} \label{cka}
\diag \left(\underbrace{{\cal M}_{\alpha+\eta+1}, \dots , {\cal M}_{\alpha+\eta+1}}_{s},\underbrace{{\cal M}_{\alpha+\eta}, \dots , {\cal M}_{\alpha+\eta}}_{n-r-s},
{\cal J}_{1}(\mu_1), \dots , {\cal J}_{1}(\mu_{rd-2a}) \right),
\end{multline}
%\begin{multline} \label{cka}
%\{\underbrace{{\cal L}_{\alpha+\eta+1}, \dots , {\cal L}_{\alpha+\eta+1}}_{s},\underbrace{{\cal L}_{\alpha+\eta}, \dots , {\cal L}_{\alpha+\eta}}_{n-r-s}, \\ \underbrace{{\cal L}_{\alpha+\eta+1}^\top, \dots , {\cal L}_{\alpha+\eta+1}^\top}_{s}, \underbrace{{\cal L}_{\alpha+\eta}^\top, \dots , {\cal L}_{\alpha+\eta}^\top}_{n-r-s}, {\cal J}_{1}(\mu_1), \dots , {\cal J}_{1}(\mu_{rd-2a}) \}.
%\end{multline}
where $\eta:=\frac{1}{2}(d-1)$. We are going to show that, if $a_1\geq \frac{1}{2}(n-r)(d-1)$, then the canonical form ${\cal K}_{a_1}$ in \eqref{pg} coincides, up to the values of the eigenvalues, with the one of ${\cal F}_{K_a}$ in \eqref{cka}, for $a=a_1-\frac{1}{2}(n-r)(d-1)$, as claimed in the statement.

%To represent explicitly the left and right minimal indices, we split every pencil ${\cal M}$ in \eqref{cka} into a pair of pencils ${\cal L}$  and ${\cal L}^\top$ and obtain:
%\begin{multline} \label{pg}
%\{\underbrace{{\cal L}_{\alpha_1+1}, \dots , {\cal L}_{\alpha_1+1}}_{s_1},\underbrace{{\cal L}_{\alpha_1}, \dots , {\cal L}_{\alpha_1}}_{n_1-r_1-s_1}, \\ \underbrace{{\cal L}_{\alpha_1+1}^\top, \dots , {\cal L}_{\alpha_1+1}^\top}_{s_1}, \underbrace{{\cal L}_{\alpha_1}^\top, \dots , {\cal L}_{\alpha_1}^\top}_{n_1-r_1-s_1},{\cal J}_{1}(\mu_1), \dots , {\cal J}_{1}(\mu_{r_1 - 2a_1})\}.
%\end{multline}
By the choice of $a$ and $a_1$, the number of eigenvalues in both \eqref{pg} and \eqref{cka} coincide, namely $r_1-2a_1=n(d-1)+r-2(a+\frac{1}{2}(n-r)(d-1))=rd-2a$. It remains to show that the numbers and the indices of the ${\cal M}$ blocks in \eqref{pg} and \eqref{cka} coincide as well, i.e. $\alpha  + \eta = \alpha_1$, $s=s_1$, and $n-r-s = n_1 - r_1 - s_1$.

For the indices of the ${\cal M}$ blocks we have
\begin{equation} \label{alphasize1}
\begin{array}{cll}
\displaystyle\alpha  +\eta& \displaystyle=\left\lfloor \frac{a}{n-r} \right\rfloor +  \frac{d - 1}{2} =  \left\lfloor  \frac{\frac{1}{2}(n-r)(d - 1) + a}{n-r} \right\rfloor  \\
& = \displaystyle\left\lfloor  \frac{a_1}{n_1-r_1} \right\rfloor  = \alpha_1,
\end{array}
\end{equation}
as claimed.
%Since $a = 0,1,\dots,\left\lfloor \frac{rd}{2} \right\rfloor$, then $a_1 = \frac{1}{2}(n(d-1)+r) - \frac{rd}{2},\frac{1}{2}(n(d-1)+r) - \frac{rd}{2} +1,\dots,\frac{1}{2}(n(d-1)+r) - \frac{rd}{2} + \left\lfloor \frac{rd}{2} \right\rfloor$, or equivalently $a_1 = \frac{r_1}{2} - \frac{rd}{2},  \frac{r_1}{2} -\frac{rd}{2} +1,\dots,  \frac{r_1}{2} - \frac{rd}{2} + \left\lfloor \frac{rd}{2} \right\rfloor$.

For the number of ${\cal M}$ blocks %$s$ and $n-r-s$,
we have
\begin{align*}
s &= a \mod (n-r) = \left(\frac{1}{2}(n-r)(d - 1) + a\right) \mod (n-r) \\
%&= \left(\frac{1}{2}(n(d-1)+r -rd) + a\right) \mod (nd - (n(d-1) + r))\\
&=  a_1 \mod (n_1 - r_1) =s_1.
\end{align*}
Therefore, $n-r-s=n_1-r_1-s_1$ as well, as wanted.
\end{proof}}

As in the case {of} general and skew-symmetric matrix polynomials \cite{dmydop-laa-2017,dmydop-laa-2018}, the following lemma {reveals} a key relation between $\overline{\bun^{{\rm syl}}}({\cal F}_{P})$, where the closure is taken in $\GSYL^{s}_{d,n\times n}$, and $\overline{\bun^c}({\cal F}_{P})$, where the closure is taken in $\PEN^{s}_{n_1 \times n_1}$, with $n_1 = n d$.

\begin{lemma}\label{cl}
Let $P(\la)$ be an $n\times n$ symmetric matrix polynomial with odd grade $d$ and let ${\cal F}_{P} (\la)$ be its linearization \eqref{linform}. Then $\overline{\bun^{{\rm syl}}}({\cal F}_{P})= \overline{\bun^c}({\cal F}_{P}) \cap \GSYL^{s}_{d,n\times n}$.
\end{lemma}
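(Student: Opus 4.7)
The plan is to establish the equality $\overline{\bun^{\rm syl}}({\cal F}_{P})= \overline{\bun^c}({\cal F}_{P}) \cap \GSYL^{s}_{d,n\times n}$ by a double inclusion argument, where the nontrivial ingredient is Theorem \ref{deform}. That theorem allows one to reduce any sufficiently small symmetric perturbation of a linearization of the form \eqref{linform} to another linearization of the same form by a congruence transformation, at the price of a correspondingly small symmetric perturbation of the underlying polynomial.

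For the easy inclusion ``$\subseteq$'', I would take $L(\la)\in\overline{\bun^{\rm syl}}({\cal F}_P)$. Since the closure is taken in $\GSYL^s_{d,n\times n}$, we automatically have $L\in \GSYL^s_{d,n\times n}$, and there exists a sequence $\{M_k\}\subseteq\bun^{\rm syl}({\cal F}_P)$ converging to $L$ in the $\GSYL$-metric, which is the restriction of the $\PEN$-metric. Because $\bun^{\rm syl}({\cal F}_P)\subseteq \bun^c({\cal F}_P)$ by the definition \eqref{linorb}, this same sequence shows $L\in\overline{\bun^c}({\cal F}_P)$, yielding the inclusion.

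For the reverse inclusion ``$\supseteq$'', I would take $L\in\overline{\bun^c}({\cal F}_P)\cap\GSYL^s_{d,n\times n}$. Since $L\in\GSYL^s_{d,n\times n}$, there is an $n\times n$ symmetric matrix polynomial $Q(\la)$ of grade $d$ with $L={\cal F}_Q$. Pick a sequence $\{L_k\}\subseteq\bun^c({\cal F}_P)$ with $L_k\to L$ in $\PEN^s_{n_1\times n_1}$, and write $L_k=L+E_k$, where each $E_k$ is a symmetric pencil (since both $L_k$ and $L$ are symmetric) and $E_k\to 0$. Applying Theorem \ref{deform} to the polynomial $Q$ and the perturbation $E_k$, for all $k$ large enough we obtain a nonsingular $C_k$ and a symmetric matrix polynomial $S_k(\la)$ of grade $d$ with $\|S_k\|_F\to 0$ such that $C_k^\top L_k C_k = {\cal F}_{Q+S_k}\in \GSYL^s_{d,n\times n}$. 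Congruence preserves membership in $\bun^c({\cal F}_P)$, so ${\cal F}_{Q+S_k}\in \bun^c({\cal F}_P)\cap\GSYL^s_{d,n\times n}=\bun^{\rm syl}({\cal F}_P)$; and since $S_k\to 0$, we get ${\cal F}_{Q+S_k}\to {\cal F}_Q=L$, which places $L$ in $\overline{\bun^{\rm syl}}({\cal F}_P)$.

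The main obstacle is precisely the ``$\supseteq$'' direction: the approximating sequence $L_k\in\bun^c({\cal F}_P)$ generally does \emph{not} respect the rigid block pattern \eqref{linform} that defines $\GSYL^s_{d,n\times n}$, so one cannot approximate $L$ within $\bun^{\rm syl}({\cal F}_P)$ using the $L_k$ directly. Theorem \ref{deform} is designed exactly to surmount this difficulty: it ``straightens'' each $L_k$ back into $\GSYL^s_{d,n\times n}$ by a congruence while keeping the approximation close to $L$. Without this shape-restoration result, one could not rule out that the left-hand side is strictly smaller than the right-hand side.
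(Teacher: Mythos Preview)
Your proof is correct and follows essentially the same approach as the paper's own proof: both establish the nontrivial inclusion $\overline{\bun^c}({\cal F}_{P}) \cap \GSYL^{s}_{d,n\times n} \subseteq \overline{\bun^{\rm syl}}({\cal F}_{P})$ by taking a point ${\cal F}_Q$ in the left-hand side, approximating it by elements of $\bun^c({\cal F}_P)$, and then invoking Theorem~\ref{deform} to straighten each approximant into $\GSYL^{s}_{d,n\times n}$ via congruence. The paper phrases this with ``arbitrarily small perturbations'' rather than explicit sequences, but the argument is the same.
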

\begin{proof}
The proof of this lemma follows the proof of Lemma 5.1 in \cite{dmydop-laa-2018} but now it deals with the bundles and not with the orbits.
By definition, $\bun^{\rm{syl}}({\cal F}_{P})= \bun^c({\cal F}_{P}) \cap \GSYL$, and thus $\overline{\bun^{\rm{syl}}}({\cal F}_{P})= \overline{\bun^c({\cal F}_{P}) \cap \GSYL}$  (the closure here is taken in the space $\GSYL$). For any ${\cal F}_{Q} \in \overline{\bun^c}({\cal F}_{P})\cap \GSYL$ there exists an arbitrarily small  (entrywise) symmetric pencil ${\mathcal E}$ such that $({\cal F}_{Q} +{\mathcal E}) \in \bun^c({\cal F}_{P})$. Therefore by Theorem \ref{deform},
there exists an arbitrarily small  (entrywise) symmetric matrix polynomial $S$ with grade $d$ such that ${\cal F}_{Q+S} \in \bun^c({\cal F}_{P})$. Thus ${\cal F}_{Q}  \in \overline{\bun^c({\cal F}_{P}) \cap \GSYL}$, and this implies $\overline{\bun^c}({\cal F}_{P}) \cap \GSYL \subseteq\overline{\bun^c({\cal F}_{P}) \cap \GSYL}$.  Since $\overline{\bun^c({\cal F}_{P}) \cap \GSYL} \subseteq \overline{\bun^c}({\cal F}_{P}) \cap \GSYL$, we have that $\overline{\bun^c({\cal F}_{P}) \cap \GSYL}= \overline{\bun^c}({\cal F}_{P}) \cap \GSYL$, and the result is proved.
\end{proof}

Now we are ready to state and prove the main result of this paper. It shows that the generic eigenstructures in Theorem \ref{th:improvedDeDo} which are compatible with eigenstructures of linearizations \eqref{linform} of symmetric matrix polynomials with odd grade $d$ and bounded rank give, just by subtracting $(d-1)/2$ to each minimal index, the generic eigenstructures for such polynomials.
%\newpage

\begin{theorem} \label{mainth}
Let $n,r$, and $d$ be integers such that $n \geq 2$, $d \geq 1$ is odd, and $1 \leq r \leq (n-1)${, and let ${\mathbf K}_a$,
for $a = 0,1,\dots,\lfloor \frac{rd}{2} \rfloor$, be the complete eigenstructures of symmetric matrix polynomials $K_a (\la)$ of degree exactly $d$ and rank exactly $r$ in Lemma {\rm\ref{lem.eiglinz}}}. Then,
\begin{itemize}
\item[{\rm(i)}]  For every $n \times n$ complex symmetric matrix polynomial $M(\la)$ of grade $d$ {and} rank at most $r$, there {is some $0\leq a\leq \lfloor\frac{rd}{2}\rfloor$} such that $\overline{\bun^{ s}}(K_a)\supseteq \overline{\bun^{ s}}(M)$.

\item[{\rm(ii)}]  {$\overline{\bun^{ s}}(K_{a}) \bigcap \bun^{ s}(K_{a'})= \emptyset$} %{ $\overline{\bun^{ s}}(K_{a}) \nsupseteq \overline{\bun^{ s}}(K_{a'})$}
whenever $a\neq a'$. %$\overline{\orb}(K_a) \not\supseteq \overline{\orb}(K_{a'})$ = \emptyset

\item[{\rm(iii)}]  The set of $n \times n$ complex symmetric matrix polynomials of grade $d$ with rank at most $r$ is a closed subset of ${\POL_{d, n\times n}^{s}}$ equal to $\bigcup_{0 \leq a \leq \lfloor \frac{rd}{2} \rfloor} \overline{\bun^{ s}}(K_a)$.
\end{itemize}
\end{theorem}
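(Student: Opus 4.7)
Part (iii) is the heart of the theorem and I would prove it first, after which parts (i) and (ii) follow by routine bundle manipulations. The whole argument proceeds by translating the problem for symmetric polynomials of odd grade $d$ and rank at most $r$ into the already-solved classification for symmetric $n_1 \times n_1$ pencils of rank at most $r_1$, where $n_1 := nd$ and $r_1 := n(d-1)+r$ (Theorem \ref{th:improvedDeDo}). The translation relies on the strong linearization ${\cal F}_P$ of \eqref{linform}, the isometry $f$ of \eqref{homeo}, the dictionary $\bun^c({\cal K}_{a_1}) = \bun^c({\cal F}_{K_a})$ from Lemma \ref{lem.eiglinz}(iii) (whenever $a_1 = a + \frac{1}{2}(n-r)(d-1)$), and the identity $\overline{\bun^{\rm syl}}({\cal F}_P) = \overline{\bun^c}({\cal F}_P) \cap \GSYL^s_{d,n\times n}$ of Lemma \ref{cl}.

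For (iii), the inclusion $\bigcup_{a} \overline{\bun^s}(K_a) \subseteq \{\,M \in \POL^s_{d,n\times n} : \rank M \leq r\,\}$ is immediate from Lemma \ref{lem.eiglinz}(i) (each $K_a$ has rank $r$) together with the closedness of the right-hand side, which follows from the lower semicontinuity of the rank. For the converse, take such an $M$ and consider ${\cal F}_M \in \GSYL^s_{d,n\times n}$, which has rank at most $r_1$. Theorem \ref{th:improvedDeDo}(iii) produces some $a_1 \in \{0,1,\ldots,\lfloor r_1/2 \rfloor\}$ with ${\cal F}_M \in \overline{\bun^c}({\cal K}_{a_1})$. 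Granting the key bound $a_1 \geq \frac{1}{2}(n-r)(d-1)$, Lemma \ref{lem.eiglinz}(iii) gives $\overline{\bun^c}({\cal K}_{a_1}) = \overline{\bun^c}({\cal F}_{K_a})$ for $a := a_1 - \frac{1}{2}(n-r)(d-1)$; intersecting with $\GSYL^s_{d,n\times n}$ and invoking Lemma \ref{cl} yields ${\cal F}_M \in \overline{\bun^{\rm syl}}({\cal F}_{K_a})$, and then $f^{-1}$ delivers $M \in \overline{\bun^s}(K_a)$.

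The main obstacle, and the step that essentially uses the polynomial (rather than pencil) structure, is proving $a_1 \geq \frac{1}{2}(n-r)(d-1)$, which I would establish via Theorem \ref{deform}. Choose a sequence ${\cal P}_k \in \bun^c({\cal K}_{a_1})$ with ${\cal P}_k \to {\cal F}_M$ and set ${\cal E}_k := {\cal P}_k - {\cal F}_M$, an arbitrarily small symmetric pencil. Theorem \ref{deform} then supplies, for all sufficiently large $k$, a nonsingular $C_k$ and an arbitrarily small symmetric polynomial $S_k$ of grade $d$ with $C_k^\top({\cal F}_M + {\cal E}_k)C_k = {\cal F}_{M+S_k}$. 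Hence ${\cal F}_{M+S_k}$ is congruent to ${\cal P}_k \in \bun^c({\cal K}_{a_1})$, so shares its Kronecker canonical form, which (because $s_1 = a_1 \bmod (n-r) \leq n-r-1$) contains at least one block ${\cal M}_{\alpha_1}$ with $\alpha_1 := \lfloor a_1/(n-r)\rfloor$; in particular $\alpha_1$ is a minimal index of ${\cal F}_{M+S_k}$. But ${\cal F}_{M+S_k}$ is the strong linearization \eqref{linform} of a symmetric polynomial of grade $d$, so Theorem \ref{skewlin} forces every minimal index of ${\cal F}_{M+S_k}$ to be at least $(d-1)/2$. Combining these gives $\alpha_1 \geq (d-1)/2$, equivalently $a_1 \geq \frac{1}{2}(n-r)(d-1)$.

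Parts (i) and (ii) then fall out from the invariance properties of $\overline{\bun^s}(K_a)$. For (i), (iii) provides some $a$ with $M \in \overline{\bun^s}(K_a)$; since $\bun^s(K_a)$ is, by definition, a union of symmetric orbits taken over all admissible distinct values of the free eigenvalues, it is invariant under continuous relabeling of those eigenvalues, an invariance inherited by the closure. Thus every polynomial $M'$ sharing with $M$ the same eigenstructure up to eigenvalue values lies in $\overline{\bun^s}(K_a)$, giving $\bun^s(M) \subseteq \overline{\bun^s}(K_a)$ and hence $\overline{\bun^s}(M) \subseteq \overline{\bun^s}(K_a)$. For (ii), suppose there exists $Q \in \bun^s(K_{a'}) \cap \overline{\bun^s}(K_a)$ with $a \neq a'$; pushing through $f$, Lemma \ref{cl}, and Lemma \ref{lem.eiglinz}(iii), the pencil ${\cal F}_Q$ satisfies ${\cal F}_Q \in \bun^c({\cal K}_{a_1'}) \cap \overline{\bun^c}({\cal K}_{a_1})$ with $a_1 \neq a_1'$. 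The same invariance of the closed set $\overline{\bun^c}({\cal K}_{a_1})$ under congruence and eigenvalue relabeling then upgrades this to $\overline{\bun^c}({\cal K}_{a_1'}) \subseteq \overline{\bun^c}({\cal K}_{a_1})$, contradicting Theorem \ref{th:improvedDeDo}(ii).
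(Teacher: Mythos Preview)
Your proof of the core step---that $a_1 \geq \frac{1}{2}(n-r)(d-1)$ via Theorem \ref{deform}---is essentially identical to the paper's, and your organization (proving (iii) first) is fine. The gap is in how you derive (i) and (ii) from (iii).

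In both places you invoke ``invariance of the closure under continuous relabeling of eigenvalues'' to pass from containing a single point of a bundle to containing the whole bundle. This step is not justified. Invariance of $\bun^s(K_a)$ (or $\bun^c({\cal K}_{a_1})$) under eigenvalue relabeling of its \emph{own} eigenvalues does not give invariance of the closure under relabeling the eigenvalues of an \emph{arbitrary} $M$ in that closure: there is no global homeomorphism of $\POL^s_{d,n\times n}$ (or $\PEN^s_{n_1\times n_1}$) that performs such a relabeling, since M\"obius transformations have only three parameters and cannot move an arbitrary eigenvalue configuration to another. What you are implicitly using is the statement ``bundle closures are unions of bundles,'' which is closely related to ``bundles are open in their closures''; the paper explicitly notes (end of Section \ref{sec.openbunbles}) that this is \emph{not known} in general for bundles, and Theorems \ref{th:open-pencil}--\ref{th:open} establish it only for the specific generic bundles $\bun^c({\cal K}_a)$, $\bun^s(K_a)$---results that come \emph{after} Theorem \ref{mainth} and themselves rely on it.

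The fix is simple and is what the paper does: do not try to upgrade membership to bundle inclusion on your own. For (i), apply Theorem \ref{th:improvedDeDo}(i), which already delivers the bundle-closure inclusion $\overline{\bun^c}({\cal F}_M)\subseteq\overline{\bun^c}({\cal K}_{a_1})$ at the pencil level; then intersect with $\GSYL$, use Lemma \ref{cl}, and pull back with $f^{-1}$ via \eqref{equivalence}. For (ii), the paper observes that the proof of Theorem \ref{th:improvedDeDo}(ii) in \cite{dtdmydop2019} actually yields the stronger statement $\overline{\bun^c}({\cal K}_{a_1})\cap\bun^c({\cal K}_{a_1'})=\emptyset$ for $a_1\neq a_1'$; translating this through Lemma \ref{lem.eiglinz}(iii), Lemma \ref{cl}, and $f^{-1}$ gives (ii) directly, without any relabeling argument.
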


\begin{proof}
(i) For every $n \times n$ symmetric matrix polynomial $M$ of grade $d$ and rank at most $r$, the linearization ${\cal F}_{M}$ {in \eqref{linform}} has rank at most $r+n(d-1)$, because ${\cal F}_{M}$ is unimodularly equivalent to $M \oplus I_{n(d-1)}$.  Thus, for each such $M$ we have that $\overline{{\bun}^c}(\mathcal{K}_{a_1})\supseteq \overline{{\bun}^c}({\cal F}_{M})$, where $\mathcal{K}_{a_1}(\lambda)$ is one of the $nd \times nd$ symmetric matrix pencils of rank $r+n(d-1)$,
%equal to one of the $\mathcal{K}_{a_1}(\lambda)$ pencils
defined in Theorem \ref{th:improvedDeDo}.
This means, in particular, that there exists a sequence $\{ y_i \} \subset {\bun}^c(\mathcal{K}_{a_1})$ {converging to} ${\cal F}_{M}$ and, so, for any $i$ large enough, $y_i$ is a small perturbation of ${\cal F}_{M}$ and Theorem~\ref{deform} can be applied to the polynomial $M$ and $y_i$.
Therefore, for these large enough $i$, $y_i$ is congruent to ${\cal F}_{P_i}$ for a certain symmetric polynomial $P_i$ of grade $d$ and size $n\times n$.
The pencils ${\cal F}_{P_i}$ may belong to different orbits (they may differ from each other in the values of the eigenvalues) but all ${\cal F}_{P_i}$ belong to the same bundle ${\bun}^c(\mathcal{K}_{a_1})$ and, therefore, ${\bun}^c(\mathcal{K}_{a_1}) = {\bun}^c({\cal F}_{P_i})$, as well as ${\cal F}_{P_i}$ has rank $r+n(d-1)$, which is equivalent to say that $P_i$ has rank $r$. Now, since ${\cal K}_{a_1}$ has the same eigenstructure (except, perhaps, for the values of the eigenvalues) that ${\cal F}_{P_i}$, then parts (ii)--(iii) in Lemma \ref{lem.eiglinz} guarantee that, for some $0\leq a \leq\lfloor\frac{rd}{2}\rfloor$, $\bun^c({\cal F}_{K_a})=\bun^c({\cal K}_{a_1})=\bun^c({\cal F}_{P_i})$.
Thus $\overline{{\bun}^c}({\cal F}_{{K_a}})\supseteq \overline{{\bun}^c}({\cal F}_{M})$ and $\overline{{\bun}^c}({\cal F}_{{K_a}}) \cap \GSYL \supseteq \overline{{\bun}^c}({\cal F}_{M}) \cap \GSYL$. The latter is equivalent to $\overline{{\bun}^{\rm{syl}}}({\cal F}_{{K_a}}) \supseteq \overline{{\bun^{\rm{syl}}}}({\cal F}_{M})$ by Lemma \ref{cl}, which is, in turn, equivalent to $\overline{\bun^{ s}}({ K_a}) \supseteq \overline{\bun^{ s}}(M)$, {by \eqref{equivalence}}.

%The remaining part of the proof is to show that the generic symmetric matrix pencils of size $nd \times nd$ and rank $n(d-1)+r$ (see Theorem~\ref{th:improvedDeDo}) are congruent to the linearization of $n \times n$ matrix polynomials of grade $d$ and rank $r$ if and only if these matrix pencils are congruent to the linearization of the polynomials with the complete eigenstructures ${\mathbf K}_a$ in $\eqref{kcilist}$.

%Since both ${\cal F}_{K_a}$ and ${\cal K}_{a_1}$ are symmetric, they are also congruent.

(ii) A close look at the proof of Theorem \ref{th:improvedDeDo}-(ii) in  \cite[Thm. 3.2]{dtdmydop2019} allows us to see that the stronger result
$\overline{\bun^c}({\cal K}_{a})
\cap \bun^c({\cal K}_{a'}) = \emptyset$, whenever $a \ne
a'$, holds. Combining this fact with Lemma \ref{lem.eiglinz}-(iii), we immediately obtain that $\overline{{\bun}^c}({\cal F}_{K_a})  \cap {\bun}^c({\cal F}_{K_{a'}}) = \emptyset$. Intersecting the latter equality with $\GSYL$ and applying Lemma \ref{cl} results in $({\overline{{\bun}^c}({\cal F}_{K_a})} \cap \GSYL )\ \bigcap \ \left( {\bun}^c({\cal F}_{K_{a'}}) \cap \GSYL \right) = \emptyset$ and $\overline{{\bun}^{\rm{syl}}}({\cal F}_{K_{a}}) \bigcap \bun^{\rm{syl}}({\cal F}_{K_{a'}})= \emptyset$. This implies $f^{-1}\left(\overline{\bun^{\rm{syl}}}({\cal F}_{K_a})\right)\bigcap f^{-1}(\bun^{\rm{syl}}({\cal F}_{K_a'}))=\emptyset$, with $f$ as in \eqref{homeo}, namely $\overline{\bun^{ s}}(K_{a}) \bigcap \bun^{ s}(K_{a'})= \emptyset$.

%Thus $\overline{{\orb}({\cal C}_{K_a}^1) \cap \GSYL} \not\supseteq \overline{{\orb}({\cal C}_{K_{a'}}^1) \cap \GSYL}$, i.e., $\overline{{\orb^{\GSYL}}}({\cal C}_{K_{a}}^1) \not\supseteq \overline{{\orb^{\GSYL}}}({\cal C}_{K_{a'}}^1)$ and $\overline{\orb}(K_{a}) \not\supseteq \overline{\orb}(K_{a'})$.

(iii) By (i), any $n\times n$ complex symmetric matrix polynomial of grade $d$ with rank at most $r$ is in one of the $\left\lfloor \frac{rd}{2} \right\rfloor+1$ closed sets $\overline{\bun^{ s}}(K_a)$. Thus (iii) holds, since the union of a finite number of closed sets is also a closed set.
\end{proof}

\subsection{Codimension of the generic bundles}

For any $P \in \POL_{d, n\times n}^{s}$, we define the codimension of $\orb^{ s} (P) \subset \POL_{d, n\times n}^{s}$
%of the set of all the symmetric matrix polynomials with the same eigenstructure as $P$
to be
$
\cod \orb^{ s} (P) :=
\cod \orb^{\rm{syl}}({\cal F}_{P}).$
Translating the arguments in \cite[Sect.~6]{Dmyt15} from skew-symmetric to symmetric matrix polynomials, we see that $\orb^{\rm{syl}}({\cal F}_{P})$ is a manifold in the space of symmetric matrix pencils $\PEN_{n_1 \times n_1}^{s}$, where $n_1 = nd$, and
$
\cod \orb^{\rm{syl}}({\cal F}_{P})=
\cod \orb^{c}({\cal F}_{P}),
$
where the codimension of $\orb^{\rm{syl}}({\cal F}_{P})$ is considered in the space $\GSYL_{d, n\times n}^{s}$ and the codimension of $\orb^{c}({\cal F}_{P})$ in $\PEN_{n_1 \times n_1}^{s}$. The codimensions of $\orb^{c}({\cal F}_{P})$ are computed explicitly via the eigenstructure of ${\cal F}_{P}$ in \cite{DmKS14},
taking into account that congruence orbits are {differentiable} manifolds in the complex $n_1^2+n_1$ dimensional space of symmetric pencils and, so, their codimensions are well defined as the dimensions of their normal spaces. Therefore, for the generic $n\times n$ symmetric matrix polynomial $K_a$ with grade $d$ identified in Theorem \ref{mainth}, we have
$\cod \orb^{ s}(K_a) = \cod \orb^{c}({\cal F}_{K_a}) = \cod \orb^{c} ({\cal K}_{a_1})$, where ${\cal K}_{a_1}$ is the pencil in Theorem~\ref{th:improvedDeDo} with the identifications $n \mapsto n_1=nd$, $r \mapsto r_1 = n(d-1) + r$, and $a \mapsto a_1=a+\frac{1}{2}(n-r)(d-1)$. Therefore, from \cite[Sect.~5]{dtdmydop2019} we obtain:
\begin{align*}
\cod \orb^{ s}(K_a) & = (n_1-a_1)(n_1 - r_1 + 1) \\
	     &= \left(nd - \frac{1}{2}(n(d-1)+r-rd) -a\right)(nd - (n(d-1)+r) +1)\\
             & = \frac{1}{2} (n(d+1) + r(d-1) -2a) (n - r +1) \\
             &= \frac{1}{2} \left((n+ r)(d-1) +2(n-a)\right) (n - r +1)  \, .
\end{align*}
Following, e.g., \cite{Dmyt15,DmKa14,EdEK99,JoKV13}, {we set} the codimension of $\bun^{ s}(P)$, for a matrix polynomial $P$, as:
\begin{equation*} \label{buncodim}
    \cod  \bun^{ s}(P):=
    \cod  \orb^{ s} (P) - \ \#
    \left\{ \text{distinct eigenvalues of } {P} \right\}.
\end{equation*}
Therefore, for the generic bundles $\bun^{ s}(K_a)$, we have:
\begin{equation*} \label{codimcompbun}
\begin{aligned}
\cod \bun^{ s}(K_{a}) &= \frac{1}{2} \left((n+ r)(d-1) +2(n-a)\right) (n - r +1) - rd + 2a \\
&= \frac{1}{2} (n+ r)(d-1) - rd + n + n(n - r) - a (n - r - 1)    \\
&= \frac{1}{2} (nd - rd + n - r) + n(n - r) - a (n - r - 1)    \\
& = (n-r)\left(n + \frac{d+1}{2}\right)-a(n-r-1).
\end{aligned}
\end{equation*}

Note that the generic bundles in Theorem \ref{mainth} have different codimensions if $r<n-1$, because in this case the codimensions depend on the parameter $a$. In particular, the bundle with the largest $a$ has the smallest codimension or, equivalently, the largest dimension. In other words, the bundle with less eigenvalues has the largest dimension.

\bigskip

\section{Openness of the generic bundles}  \label{sec.openbunbles}
In order to express the results in this section concisely, we introduce the following additional notation: $\PEN_{n\times n}^s(r)$ (respectively, ${\POL_{d, n\times n}^{s}}(r)$) denotes the set of $n\times n$ symmetric pencils (resp., matrix polynomials of grade $d$) with rank at most $r$.

Theorem \ref{mainth}-(iii) shows that the complete eigenstructures ${{\mathbf K}_a}$ and their associated bundles $\bun^{ s} (K_a)$ play a prominent role in ${\POL_{d, n\times n}^{s}}(r)$, because any $n\times n$ symmetric matrix polynomial of grade $d$ and rank at most $r$ is the limit of a sequence of polynomials having exactly one of these structures. In other words, the set ${\POL_{d, n\times n}^{s}}(r)$ can be decomposed into a finite number of pieces where each of these bundles is dense. Equivalently,
$${\POL_{d, n\times n}^{s}}(r) = \overline{\bigcup_{0 \leq a \leq \lfloor \frac{rd}{2} \rfloor} \bun^{ s} (K_a)}$$
holds, since the closure of the union of a finite number of sets is equal to the union of their closures. Therefore, we can state that $\bigcup_{0 \leq a \leq \lfloor \frac{rd}{2} \rfloor} \bun^{ s} (K_a)$ is dense in ${\POL_{d, n\times n}^{s}}(r)$. However, this result does not guarantee that $\bigcup_{0 \leq a \leq \lfloor \frac{rd}{2} \rfloor} \bun^{ s} (K_a)$ is ``generic" inside ${\POL_{d, n\times n}^{s}}(r)$ in the standard topological sense, since we do not know yet whether or not $\bigcup_{0 \leq a \leq \lfloor \frac{rd}{2} \rfloor} \bun^{ s} (K_a)$ is open in ${\POL_{d, n\times n}^{s}}(r)$. The goal of this section is to prove this property. In fact, Theorem \ref{th:open} proves a stronger result, namely, that each bundle $\bun^{ s} (K_a)$ is open in ${\POL_{d, n\times n}^{s}}(r)$. So, the eigenstructures ${{\mathbf K}_a}$ in \eqref{kcilist} can be properly termed in this strong topological sense as the ``generic" complete eigenstructures of $n\times n$ symmetric matrix polynomials with grade $d$ and rank at most $r$. Theorem \ref{th:open} provides an analytical formulation of this openness in terms of the distance introduced in Section \ref{sect.linz} in the spirit of \cite[Cor. 3.3]{dmydop-laa-2017}. However, we emphasize that the fact that we are dealing with bundles instead of with orbits makes the proof of Theorem \ref{th:open} considerably more difficult than the proof of \cite[Cor. 3.3]{dmydop-laa-2017}. This point will be further discussed at the end of this section.

We first state and prove the corresponding result for matrix pencils, on which the result for matrix polynomials of arbitrary odd grade $d$ is based.

\begin{theorem}\label{th:open-pencil} {\rm (Analytical formulation of generic bundles in $\PEN_{n\times n}^s(r)$).}
Let $n$ and $r$ be integers such that $n \geq 2$ and $1\leq r \leq n-1$, and let ${\cal K}_{a} (\lambda)$, for $a=0,1,\ldots,\lfloor\frac{r}{2}\rfloor\,$, be any of the symmetric $n \times n$ pencils with rank $r$ in Theorem {\rm\ref{th:improvedDeDo}}. Then
\begin{itemize}
\item[\rm (i)] For every $n\times n$ complex symmetric matrix pencil with rank at most $r$, $S$, and every $\epsilon>0$, there is an $n\times n$ complex symmetric matrix pencil $S_a\in \bun^c(\cK_a)$, for some $0\leq a\leq \lfloor\frac{r}{2}\rfloor$, such that $d(S,S_a)<\epsilon$.
\item[\rm (ii)] Let $0\leq a\leq \lfloor\frac{r}{2}\rfloor$. Then, for every $n\times n$ complex symmetric matrix pencil $S\in \bun^c(\cK_a)$, there is a number $\epsilon>0$ such that
\begin{equation}\label{brepsilon}
\cB_r(S,\epsilon):=\left\{S':\ S'\in\PEN_{n\times n}^s(r)\ \mbox{\rm and}\ d(S,S')<\epsilon\right\} \subseteq \bun^c(\cK_a).
\end{equation}
Moveover, such set $\cB_r(S,\epsilon)$ contains infinitely many elements.
\end{itemize}
\end{theorem}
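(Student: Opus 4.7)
My plan for part (i) is to observe that it is essentially a rewording of Theorem \ref{th:improvedDeDo}-(iii). Given $S$ with rank at most $r$, there is some $a$ such that $S \in \overline{\bun^c}(\cK_a)$, and by definition of closure $S$ is the limit of a sequence in $\bun^c(\cK_a)$; picking $S_a$ as a sufficiently late member of such a sequence yields $d(S, S_a) < \epsilon$.

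For part (ii), my first step is to separate $S$ from the other top bundles. I would invoke the stronger disjointness $\overline{\bun^c}(\cK_{a'}) \cap \bun^c(\cK_a) = \emptyset$ for $a' \neq a$, which was noted in the proof of Theorem \ref{mainth}-(ii) to follow from a closer inspection of the proof of Theorem \ref{th:improvedDeDo}-(ii) in \cite{dtdmydop2019}. This implies that $S$ lies at strictly positive distance from each of the finitely many closed sets $\overline{\bun^c}(\cK_{a'})$, $a' \neq a$; setting $\epsilon_1$ as the minimum of these distances, any $S' \in \PEN^s_{n\times n}(r)$ with $d(S,S') < \epsilon_1$ automatically belongs to $\overline{\bun^c}(\cK_a)$ by Theorem \ref{th:improvedDeDo}-(iii).

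The hard part will be refining $\epsilon_1$ to some $\epsilon$ so that every such $S'$ actually lies in $\bun^c(\cK_a)$ and not merely in its closure; equivalently, I need to show that $\bun^c(\cK_a)$ is open in $\overline{\bun^c}(\cK_a)$. My strategy is to argue that the complement $\overline{\bun^c}(\cK_a) \setminus \bun^c(\cK_a)$ is contained in a finite union of closed subsets, each of which avoids $S$: specifically, the set of pencils of rank at most $r-1$ (closed by the vanishing of all $r \times r$ minor determinants), the set of pencils whose regular part has a repeated or infinite eigenvalue (closed by the vanishing of the discriminant or of a leading coefficient), and the set of pencils whose left/right Kronecker indices differ from those of $\cK_a$ (closed by upper-semicontinuity of the ranks of suitable block-Toeplitz matrices built from the pencil coefficients). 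Since $S$ satisfies none of these closed conditions, taking $\epsilon \leq \epsilon_1$ small enough to remain outside this finite union yields the required neighborhood.

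Infiniteness of $\cB_r(S,\epsilon)$ will follow immediately, since each congruence orbit in $\bun^c(\cK_a)$ is already infinite, being the non-trivial continuous image of $GL_n(\mathbb{C})$. The main obstacle I anticipate is the careful verification that the three closed conditions above really exhaust $\overline{\bun^c}(\cK_a) \setminus \bun^c(\cK_a)$, and that each is truly closed relative to $\overline{\bun^c}(\cK_a)$ rather than merely to one stratum. Because a bundle is an infinite union of orbits parametrized by continuously varying eigenvalues, one must check that these closed conditions vary uniformly with the eigenvalue parameters, so that their union over all eigenvalue choices remains closed; this is the delicate point alluded to in the commentary preceding the theorem, and is where specialized machinery on bundle stratifications (e.g.\ from \cite{EdEK99,DmKa14,dtdmydop2019}) is likely required.
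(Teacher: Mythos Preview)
Your outline for part (i) and the first reduction in part (ii) match the paper exactly: the paper also argues, via the disjointness $\overline{\bun^c}(\cK_{a'}) \cap \bun^c(\cK_a) = \emptyset$ for $a'\ne a$ and a finite-union pigeonhole, that some $\epsilon_0$-ball in $\PEN_{n\times n}^s(r)$ centred at $S$ avoids every $\overline{\bun^c}(\cK_{a'})$ with $a'\ne a$, hence lies in $\overline{\bun^c}(\cK_a)$. The infiniteness claim is also handled the same way (the paper writes it as $(I+zE)^\top S(I+zE)$, which is your congruence-orbit observation).

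The divergence is in the hard step, and here your proposed execution has two concrete problems. First, ``the discriminant of the regular part'' is not a polynomial function of the entries of a singular pencil: the invariant factors are built from GCDs of minors, so there is no global algebraic equation on $\PEN_{n\times n}^s$ whose vanishing detects ``repeated eigenvalue'' uniformly across $\overline{\bun^c}(\cK_a)$. (A minor related slip: having an eigenvalue at $\infty$ does \emph{not} exclude a pencil from $\bun^c(\cK_a)$, since the bundle definition lets the distinct eigenvalues take any values, infinity included.) Second, the block-Toeplitz ranks encode the minimal indices only as a \emph{locally closed} condition --- a conjunction of ``rank $\le k$'' (closed) and ``rank $\ge k$'' (open) --- so the set where the minimal indices differ from those of $\cK_a$ is not closed without an additional one-sided inequality that you have not yet established. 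You correctly flag that extra machinery is needed, but the specific route you sketch does not close.

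The paper bypasses the closed-set decomposition entirely and argues sequentially via de~Hoyos's continuity lemma \cite[Lemma~1.2]{Hoyo90}. Supposing for contradiction that $S_k\to S$ with every $S_k\in\overline{\bun^c}(\cK_a)\setminus\bun^c(\cK_a)$, one first forces $\rank S_k=r$ for large $k$. Then de~Hoyos's lemma is applied in \emph{both directions}: to a sequence in $\bun^c(\cK_a)$ converging to each $S_k$ (giving $\mathfrak{r}(S_k)\prec\prec\mathfrak{r}(S)$ and $\mathfrak{l}(S_k)\prec\prec\mathfrak{l}(S)$), and to $S_k\to S$ (giving the reverse majorizations). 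Antisymmetry of weak majorization forces the minimal indices of $S_k$ and $S$ to coincide, whence the total eigenvalue degree of $S_k$ is $r-2a$. A further clause of the same lemma, applied to $S_k\to S$ and the fact that the $r-2a$ eigenvalues of $S$ are simple, then pins down that the eigenvalues of $S_k$ are also simple and pairwise distinct for large $k$, so $S_k\in\bun^c(\cK_a)$ --- the desired contradiction. This two-sided majorization argument is the key technical ingredient your proposal is missing.
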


\begin{proof}
Part (i) is an immediate consequence of Theorem \ref{th:improvedDeDo}-(i) combined with the standard definition of closure in a metric space.

Part (ii). By Theorem \ref{th:improvedDeDo}-(iii), $\cB_r(S,\epsilon) \subseteq \bigcup_{a'\neq a}\overline{\bun^c}(\cK_{a'})\cup \overline{\bun^c}(\cK_a)$, for all $\epsilon>0$. There are two possible complementary cases: either $\cB_r(S,\epsilon)$ contains at least one pencil in $\bigcup_{a'\neq a}\overline{\bun^c}(\cK_{a'})$ for all $\epsilon>0$, or not. Let us analyze separately these two cases.

{\bf Case 1}: For all $\epsilon>0$, $\cB_r(S,\epsilon)$ contains at least one pencil in $\bigcup_{a'\neq a}\overline{\bun^c}(\cK_{a'})$. This is equivalent to state that for all $\epsilon>0$, $\cB_r(S,\epsilon)$ contains infinitely many pencils in $\bigcup_{a'\neq a}\overline{\bun^c}(\cK_{a'})$, because $S \notin \bigcup_{a'\neq a}\overline{\bun^c}(\cK_{a'})$ as a consequence of Theorem \ref{mainth}-(ii) applied to pencils (that is with $d=1$). Since the number of sets in the union $\bigcup_{a'\neq a}\overline{\bun^c}(\cK_{a'})$ is finite, for all $\epsilon >0$,  $\cB_r(S,\epsilon)$ contains infinitely many pencils in some of these sets, say $\overline{\bun^c}(\cK_{\widetilde a})$, with $\widetilde a\neq a$. Therefore, $S\in\overline{\overline{\bun^c}(\cK_{\widetilde a})}=\overline{\bun^c}(\cK_{\widetilde a})$, which is a contradiction with the fact that $\bun^c(\cK_{a})\cap \overline{\bun^c(\cK_{\widetilde a})}=\emptyset$ in Theorem \ref{mainth}-(ii) (for $d=1$)\footnote{Though this equality is true for matrix pencils, as a particular case of matrix polynomials, it is not stated in Theorem \ref{th:improvedDeDo} for consistency, since this theorem is written as in the original reference \cite{dtdmydop2019}, where this equality was not included.}. Therefore, Case 1 cannot happen and its negation, i.e. Case 2, must happen.

{\bf Case 2}: There is some $\epsilon_0>0$ such that $\cB_r(S,\epsilon_0)$ does not contain pencils from $\bigcup_{a'\neq a}\overline{\bun^c}(\cK_{a'})$. Then, $\cB_r(S,\epsilon_0)\subseteq\overline{\bun^c}(\cK_a)$. We are going to prove that, in this case, there is some $\epsilon>0$ (with $\epsilon\leq\epsilon_0$) such that $\cB_r(S,\epsilon)\subseteq \bun^c(\cK_a)$. We proceed by contradiction. Assume that for all $0<\epsilon\leq\epsilon_0$, the set $\cB_r(S,\epsilon)$ contains at least one pencil in $\overline{\bun^c}(\cK_a)\setminus \bun^c(\cK_a)$. Then, there is a sequence, $\{S_k\}_{k\in\mathbb N}$, with $S_k\in \overline{\bun^c}(\cK_a)\setminus \bun^c(\cK_a)$, which converges to $S$. Moreover, since $\rank S_k \leq r$ for all $k$ and $\rank S = r$, there is a $k_0$ such that all the pencils in the subsequence $\{S_k\}_{k>k_0}$ have rank exactly $r$ since, otherwise, ${\rm rank}\, S={\rm rank}(\lim S_k)< r$, a contradiction. Therefore, we may assume that all pencils in $\{S_k\}_{k\in\mathbb N}$ have rank $r$. This is equivalent to say that the numbers of left (and right) minimal indices of all pencils $S_k$ are equal to those of $S$ and equal to $n-r$.

Following the notation in \cite{Hoyo90}, given a matrix pencil $H$ with right minimal indices $\varepsilon_1\geq\cdots\geq\varepsilon_p$ and left minimal indices $\eta_1\geq\cdots\geq\eta_q$, we denote by ${\mathfrak r}(H)=(r_1(H),r_2(H),\hdots)$ and ${\mathfrak l}(H)=(\ell_1(H),\ell_2(H),\hdots)$ the sequences whose $i$th terms are $r_i(H)={\rm card}\{j:\varepsilon_j\geq i\}$ and $\ell_i(H)={\rm card}\{j:\eta_j\geq i\}$, respectively. Now, since, for each $k$, $S_k\in\overline{\bun^c}(\cK_a)$, we have that each $S_k$ is the limit of a sequence of pencils in $\bun^c(\cK_a)$, which have all the same minimal indices and rank $r$ as $S$. Thus, Lemma 1.2 in \cite{Hoyo90} can be applied to each of these sequences and implies that,

\begin{itemize}
\item[(i)] ${\mathfrak r}(S_k)\prec\prec {\mathfrak r}(S)$, and
\item[(ii)] ${\mathfrak l}(S_k)\prec\prec {\mathfrak l}(S)$,
\end{itemize}
where $a\prec\prec b$ means weak majorization of nonincreasing sequences (see \cite[\S1]{Hoyo90}). Besides this, and since $\lim S_k=S$, Lemma 1.2 in \cite{Hoyo90} again implies that, for $k$ large enough,
\begin{itemize}
\item[(i')] ${\mathfrak r}(S)\prec\prec {\mathfrak r}(S_k)$, and
\item[(ii')] ${\mathfrak l}(S)\prec\prec {\mathfrak l}(S_k)$,
\end{itemize}
Now, (i), (ii), (i'), and (ii'), together with the fact that weak majorization of nonincreasing sequences is an order relation, imply that
$$
{\mathfrak r}(S_k)={\mathfrak r}(S),\qquad\mbox{and}\qquad {\mathfrak l}(S_k)={\mathfrak l}(S),
$$
for $k$ large enough. This implies that, for $k$ large enough, $S_k$ and $S$ have the same left and right minimal indices. Moreover, recall that the sum of all left and right minimal indices of a given pencil $H$, together with the sum of all partial multiplicities (finite and infinite) of $H$ add up to the rank of $H$ (this is just a consequence of the Kronecker Canonical Form, but we also refer the reader to \cite[Lemma 6.3]{DeDM14}). Therefore, for $k$ large enough, the sum of all partial multiplicities of $S_k$ (finite and infinite) is also the same as that of $S$, which is equal to $r-2a$.

Next, we are going to see that all the $r-2a$ (counting multiplicities) eigenvalues of $S_k$, for $k$ large enough, are simple. This is a consequence of the fact that all eigenvalues, $\mu_1 , \ldots , \mu_{r-2a}$, of $S$ are simple, combined with conditions (iii)-(iv) in \cite[Lemma 1.2]{Hoyo90} applied to $\lim S_k=S$. More precisely, first, since the $r-2a$ eigenvalues of $S$ are simple, condition (iii) in \cite[Lemma 1.2]{Hoyo90} implies that, for $k$ large enough, the $r-2a$ (counting multiplicities) eigenvalues of $S_k$ are included in the union of $r-2a$ pairwise disjoint neighbourhoods, $B(\mu_1,\delta) , \ldots , B(\mu_{r-2a},\delta)$, of the eigenvalues of $S$. Second, again since the $r-2a$ eigenvalues of $S$ are simple, the right hand side of the inequality (iv) in \cite[Lemma 1.2]{Hoyo90} is equal to $1$, for each eigenvalue $\mu_i$ of $S$ (in other words, the sum of all partial multiplicities of $\mu_i$ is equal to $1$, since $\mu_i$ is simple). Therefore, the inequality (iv) in \cite[Lemma 1.2]{Hoyo90} implies that the sum of the partial multiplicities of all eigenvalues of $S_k$ (for $k$ large enough) in $B(\mu_i,\delta)$ is at most $1$, for $i =1,\ldots,r-2a$. But, it must be equal to $1$, since the $r-2a$ eigenvalues of $S_k$ (for $k$ large enough) are included in  $B(\mu_1,\delta) \cup \cdots \cup B(\mu_{r-2a},\delta)$ and these neighbourhoods are pairwise disjoint. So, all the $r-2a$ eigenvalues of $S_k$ are simple, for $k$ large enough.

Summarizing, for $k$ large enough, $S_k$ has $r-2a$ simple eigenvalues, and the same left and right minimal indices as $S$. Hence, $S_k\in B^c(\cK_a)$, a contradiction with $S_k\in \overline{\bun^c}(\cK_a)\setminus \bun^c(\cK_a)$.

In order to prove that $\cB_r(S,\epsilon)$ contains infinitely many elements, note that for any $n\times n$ complex constant matrix $E$ and for any sufficiently small complex number $z \ne 0$, $(I + z E)^\top S (I + z E) \in \cB_r(S,\epsilon)$.
\end{proof}

%The previous Theorem says, in particular (claim (b)) that the bundles $B^c(\cK_a)$ are open in $\pencil_{n\times n}^s(r)$ (in the euclidean topology).

Next, we state and prove the analog of Theorem \ref{th:open-pencil} for complex symmetric matrix polynomials of odd grade $d$ and rank at most $r$. In our opinion, this is the second main result in this paper behind Theorem \ref{mainth}.

\begin{theorem}\label{th:open} {\rm (Analytical formulation of generic bundles in $\POL_{d,n\times n}^{s}(r)$).} Let $n,r$, and $d$ be integers such that $n \geq 2$, $d\geq 1$ is odd, and $1 \leq r \leq n -1$, and let $K_a(\la)$, for $a=0,1,\ldots,\lfloor\frac{rd}{2}\rfloor\,$, be any of the symmetric $n\times n$ matrix polynomials of degree exactly $d$ and rank exactly $r$ in Theorem \ref{mainth}. Then
\begin{enumerate}
\item[\rm (i)] For every $n \times n$ complex symmetric matrix polynomial $P$ of grade $d$ with rank at most $r$ and every $\epsilon > 0$, there exists an $n \times n$ complex symmetric matrix polynomial $P_a \in \bun^s (K_a)$, for some $0 \leq a \leq \left\lfloor \frac{rd}{2} \right\rfloor$, such that $d(P,P_a) < \epsilon$.

\item[\rm (ii)] Let $0\leq a \leq \left\lfloor \frac{rd}{2} \right\rfloor$. Then, for every $n\times n$ complex symmetric matrix polynomial $P\in \bun^s (K_a)$, there exists a number $\epsilon > 0$ such that
    \begin{equation}\label{brepsilon-pol}
    \mathcal{B}_r (P, \epsilon) := \left\{ P' \, : \, P' \in \POL_{d,n\times n}^{s}(r) \, \, \mbox{\rm and} \, \, d(P,P') < \epsilon \right\} \subseteq \bun^s (K_a).
    \end{equation}
\end{enumerate}
Moreover, such set $\mathcal{B}_r (P, \epsilon)$ contains infinitely many elements.
\end{theorem}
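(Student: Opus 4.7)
The plan is to deduce Theorem \ref{th:open} from its pencil analog, Theorem \ref{th:open-pencil}, by pulling back through the linearization $P \mapsto \mathcal{F}_P$. The reduction rests on three ingredients: (a) the map $f : \POL^s_{d,n\times n} \to \GSYL^s_{d,n\times n}$ with $f(P) = \mathcal{F}_P$ is a bijective isometry, by \eqref{homeo}; (b) $\rank \mathcal{F}_P \le n(d-1) + \rank P$, since $\mathcal{F}_P$ is unimodularly equivalent to $P \oplus I_{n(d-1)}$; and (c) $\bun^s(K_a) = f^{-1}(\bun^{\rm{syl}}(\mathcal{F}_{K_a}))$, a fact already recorded in Section \ref{sect.linz}, which combined with Lemma \ref{lem.eiglinz}-(iii) yields
\[
\bun^{\rm{syl}}(\mathcal{F}_{K_a}) \;=\; \bun^c(\mathcal{F}_{K_a}) \cap \GSYL^s_{d,n\times n} \;=\; \bun^c(\mathcal{K}_{a_1}) \cap \GSYL^s_{d,n\times n},
\]
with $a_1 := a + \tfrac{1}{2}(n-r)(d-1)$ and $r_1 := n(d-1)+r$.

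Part (i) I would derive immediately from Theorem \ref{mainth}-(iii): since $\POL^s_{d,n\times n}(r)$ equals the finite union $\bigcup_{a=0}^{\lfloor rd/2 \rfloor} \overline{\bun^s}(K_a)$, any $P$ of grade $d$ and rank at most $r$ lies in some $\overline{\bun^s}(K_a)$, and the metric-space definition of closure supplies $P_a \in \bun^s(K_a)$ within any prescribed distance $\epsilon$ of $P$.

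For part (ii), given $P \in \bun^s(K_a)$, ingredient (c) places $\mathcal{F}_P$ in $\bun^c(\mathcal{K}_{a_1})$, so in particular $\rank \mathcal{F}_P = r_1$. Applying Theorem \ref{th:open-pencil}-(ii) to the symmetric $nd \times nd$ pencil $\mathcal{F}_P$ yields some $\epsilon > 0$ with $\mathcal{B}_{r_1}(\mathcal{F}_P,\epsilon) \subseteq \bun^c(\mathcal{K}_{a_1})$. I would use this same $\epsilon$ in \eqref{brepsilon-pol}: for any $P' \in \mathcal{B}_r(P,\epsilon)$, ingredient (a) gives $d(\mathcal{F}_{P'},\mathcal{F}_P) = d(P',P) < \epsilon$, while ingredient (b) gives $\rank \mathcal{F}_{P'} \le r_1$; hence $\mathcal{F}_{P'} \in \mathcal{B}_{r_1}(\mathcal{F}_P,\epsilon) \subseteq \bun^c(\mathcal{K}_{a_1})$, and since $\mathcal{F}_{P'} \in \GSYL^s_{d,n\times n}$ by construction, combining with (c) forces $P' \in \bun^s(K_a)$. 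The infinite-cardinality claim follows easily: the one-parameter family $(1+z) P(\lambda)$ for small nonzero $z \in \mathbb{C}$ produces distinct polynomials in $\mathcal{B}_r(P,\epsilon) \cap \bun^s(K_a)$, since scaling by a nonzero constant preserves both the rank and the complete eigenstructure of $P$.

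The main obstacle, which is fortunately handled by Theorem \ref{th:open-pencil}-(ii), is to control bundles rather than orbits: bundles are not manifolds, and one must rule out limit pencils with coalesced or disappeared eigenvalues via the majorization criterion of \cite[Lemma 1.2]{Hoyo90}. Once that pencil openness is in hand, the polynomial statement is essentially rigid: the rank inequality (b) guarantees that small polynomial perturbations produce linearizations staying within the rank-$r_1$ pencil ball, the isometry (a) preserves the metric, and the generalized Sylvester structure of every linearization keeps the perturbations inside $\GSYL^s_{d,n\times n}$ automatically, so no $P'$ can escape $\bun^{\rm{syl}}(\mathcal{F}_{K_a})$, and hence none can escape $\bun^s(K_a)$.
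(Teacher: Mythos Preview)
Your proposal is correct and follows essentially the same route as the paper: part (i) via the density statement in Theorem \ref{mainth}, part (ii) by pushing $P$ through the isometry $f$ to the linearization $\mathcal{F}_P$, invoking Theorem \ref{th:open-pencil}-(ii) at the pencil level, and pulling back using the rank relation and the identification $\bun^s(K_a)=f^{-1}(\bun^{\rm syl}(\mathcal{F}_{K_a}))$. The only cosmetic differences are that the paper uses the equality $\rank \mathcal{F}_Q = \rank Q + n(d-1)$ (your inequality suffices for the direction needed) and exhibits the infinite family via congruences $(I+zE)^\top P(I+zE)$ rather than your scalar dilations $(1+z)P$; both work.
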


\begin{proof}
Part (i) follows immediately from Theorem \ref{mainth}-(i) combined with the standard definition of closure in a metric space.

To prove (ii), let $P\in \bun^s(K_a)$. Note that, by Lemma \ref{lem.eiglinz}-(iii), ${\cal F}_P \in \bun^c ({\cal F}_P) = \bun^c ({\cal F}_{K_a}) = \bun^c(\cK_{a_1})$, with $a_1=a+\frac{1}{2}(n-r)(d-1)$, and ${\rm rank}\,{\cal F}_P= {\rm rank}\,{\cal F}_{K_a} = r_1:=r+n(d-1)$. By Theorem \ref{th:open-pencil}-(ii), there is some $\epsilon >0$ such that ${\cal B}_{r_1}({\cal F}_P,\epsilon)\subseteq \bun^c(\cK_{a_1}) = \bun^c ({\cal F}_{K_a})$, with ${\cal B}_{r_1}({\cal F}_P,\epsilon)$ defined as in \eqref{brepsilon}. Therefore, ${\cal B}_{r_1}({\cal F}_P,\epsilon)\cap \GSYL^{s}_{d, n\times n}\subseteq \bun^c({\cal F}_{K_a})\cap\GSYL^{s}_{d, n\times n}=\bun^{\rm syl}({\cal F}_{K_a})$. Then,
$$
f^{-1}({\cal B}_{r_1}({\cal F}_P,\epsilon)\cap \GSYL^{s}_{d, n\times n})\subseteq f^{-1}(\bun^{\rm syl}({\cal F}_{K_a}))=\bun^s(K_a),
$$
where $f$ is the bijective isometry in \eqref{homeo}. To conclude the proof of the statement, we just need to see that
\begin{equation}\label{identity-f}
f^{-1}({\cal B}_{r_1}({\cal F}_P,\epsilon)\cap \GSYL^{s}_{d, n\times n})={\cal B}_r(P,\epsilon),
\end{equation}
with ${\cal B}_r(P,\epsilon)$ defined as in \eqref{brepsilon-pol}. For this, we just recall that for any $Q \in \POL_{d,n\times n}^{s}$
\begin{itemize}
\item[(a)] $d(P,Q)<\epsilon$ if and only if $d({\cal F}_P,{\cal F}_Q)<\epsilon$, since $f$ is an isometry, and

\item[(b)] ${\rm rank} \,{\cal F}_Q= {\rm rank} \,Q + n (d-1)$.
\end{itemize}

Facts (a) and (b) together imply that, if $Q\in f^{-1}({\cal B}_{r_1}({\cal F}_P,\epsilon)\cap \GSYL^{s}_{d, n\times n})$, then $d({\cal F}_P,{\cal F}_Q)<\epsilon$ and ${\rm rank}\,{\cal F}_Q\leq r_1$, so $d(P,Q)<\epsilon$ and ${\rm rank}\,Q\leq r$, that is, $Q\in {\cal B}_r(P,\epsilon)$. Conversely, if $Q\in {\cal B}_r(P,\epsilon)$, then $d({\cal F}_P,{\cal F}_Q)<\epsilon$ and ${\rm rank}\,{\cal F}_Q\leq r_1$, so $Q\in  f^{-1}({\cal B}_{r_1}({\cal F}_P,\epsilon)\cap \GSYL^{s}_{d, n\times n})$. Hence, \eqref{identity-f} is proved.

Finally, in order to prove that $\cB_r(P,\epsilon)$ contains infinitely many elements, we proceed as in Theorem \ref{th:open-pencil} and simply note that, for any $n\times n$ complex constant matrix $E$ and for any sufficiently small complex number $z \ne 0$, $(I + z E)^\top P (I + z E) \in \cB_r(P,\epsilon)$.
\end{proof}

As an immediate corollary of Theorem \ref{th:open}, the fact that the union of open sets is an open set, and the fact that the closure of the union of a finite number of sets is the union of their closures, we state, for completeness, the following result, using the same notation as in Theorem \ref{th:open}.

\begin{corollary}
The set $\bigcup_{0 \leq a \leq \lfloor \frac{rd}{2} \rfloor} \bun^{ s} (K_a)$ is open and dense in ${\POL_{d, n\times n}^{s}}(r)$.
\end{corollary}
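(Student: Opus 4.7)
The corollary is a direct consequence of Theorem~\ref{th:open}, so the plan is essentially to splice together its two parts with two elementary topological facts: arbitrary unions of open sets are open, and the closure of a finite union coincides with the union of the closures. There is no real obstacle; the main ``work'' has already been done in Theorem~\ref{th:open}, and the corollary merely packages it in the standard topological language of ``open and dense''.

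For openness, I will invoke Theorem~\ref{th:open}(ii). Fix any $a \in \{0,1,\ldots,\lfloor rd/2\rfloor\}$ and any $P \in \bun^{s}(K_a)$. Theorem~\ref{th:open}(ii) provides an $\epsilon>0$ (depending on $P$) such that $\cB_r(P,\epsilon)\subseteq \bun^{s}(K_a)$, where $\cB_r(P,\epsilon)$ is the open ball of radius $\epsilon$ centred at $P$ inside the metric subspace $\POL_{d,n\times n}^{s}(r)$. This shows that $\bun^{s}(K_a)$ is open in $\POL_{d,n\times n}^{s}(r)$ for each $a$. Since the finite (in fact, arbitrary) union of open sets is open, the set $\bigcup_{0\leq a\leq \lfloor rd/2\rfloor}\bun^{s}(K_a)$ is open in $\POL_{d,n\times n}^{s}(r)$.

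For density, I will combine Theorem~\ref{mainth}(iii) with the identity
\[
\overline{\,\bigcup_{0\leq a\leq \lfloor rd/2\rfloor}\bun^{s}(K_a)\,} \;=\; \bigcup_{0\leq a\leq \lfloor rd/2\rfloor}\overline{\bun^{s}}(K_a),
\]
which holds because the closure of a finite union of sets in a metric (or general topological) space equals the union of their closures. By Theorem~\ref{mainth}(iii), the right-hand side equals $\POL_{d,n\times n}^{s}(r)$, so the left-hand side also equals $\POL_{d,n\times n}^{s}(r)$, proving density. Alternatively, one can read density directly from Theorem~\ref{th:open}(i): every $P\in \POL_{d,n\times n}^{s}(r)$ is within arbitrary distance $\epsilon>0$ of some $P_a\in \bun^{s}(K_a)$, which is the very definition of density. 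Either route yields the statement, so the corollary follows by simply combining openness and density of $\bigcup_{0\leq a\leq \lfloor rd/2\rfloor}\bun^{s}(K_a)$ inside $\POL_{d,n\times n}^{s}(r)$.
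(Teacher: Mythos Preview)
Your proof is correct and follows exactly the approach the paper itself uses: it deduces the corollary immediately from Theorem~\ref{th:open} together with the facts that arbitrary unions of open sets are open and that the closure of a finite union equals the union of the closures. The paper does not give a separate formal proof, but the justification it sketches is precisely what you have written out.
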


Theorem \ref{th:open-pencil} (resp., Theorem \ref{th:open}) shows that the bundles $\bun^c(\cK_a)$ (resp., $\bun^s(K_a)$) are open in $\PEN_{n\times n}^s(r)$ (resp., ${\POL_{d, n\times n}^{s}}(r)$). Therefore, in particular, they are open in their closures. The property of being open in their closures is well-known for congruence orbits of arbitrary symmetric pencils (as well as orbits under strict equivalence of arbitrary unstructured pencils) \cite[Closed Orbit Lemma, p. 53]{Bore91}. A straightforward argument using the isometry $f$ in \eqref{homeo} also proves that the orbits $\orb^s(P)$ of symmetric matrix polynomials of odd grade $d$ are open in their closures. However, up to our knowledge, it is not yet known whether the bundles $\bun^c(S)$, for an arbitrary symmetric pencil $S$, and also $\bun^s(P)$, for an arbitrary symmetric polynomial $P$ of grade $d$, are open in their closures. In fact, the same questions remain open for arbitrary unstructured pencils and bundles under strict equivalence, and for bundles of unstructured matrix  polynomials of fixed grade. Theorems \ref{th:open-pencil} and \ref{th:open} show, however, that the generic bundles $\bun^c(\cK_a)$ and $\bun^s(K_a)$ are open in their closures.

We finish this section with a comparison between the simple and short proof of the analog result to Theorem \ref{th:open} for unstructured matrix polynomials of fixed grade and bounded rank, i.e., \cite[Cor. 3.3]{dmydop-laa-2017}, and the more complicated and longer proofs of Theorems \ref{th:open-pencil} and \ref{th:open}. The key difference is that \cite[Cor. 3.3]{dmydop-laa-2017} deals with orbits. The strategy of the proof of \cite[Cor. 3.3]{dmydop-laa-2017}, which is only very briefly sketched in \cite{dmydop-laa-2017}, is to prove first the result for pencils following exactly the same approach as in the proof of Theorem \ref{th:open-pencil}. However, in the case of \cite[Cor. 3.3]{dmydop-laa-2017}, the solution of Case 2 is trivial just by recalling that the corresponding orbit is open in its closure. The proof for polynomials of grade higher than one in \cite[Cor. 3.3]{dmydop-laa-2017} is completely similar to the one of Theorem \ref{th:open}. We emphasize that in stark contrast with \cite{dmydop-laa-2017}, the solution of Case 2 in the proof of Theorem \ref{th:open-pencil} is far from trivial as a consequence of the lack of a result guaranteeing that bundles are open in their closures.

\section{Conclusions and future work}\label{sec.conclusion}
In this paper, we have obtained the generic eigenstructures of $n\times n$ complex symmetric matrix polynomials with bounded (deficient) rank and fixed odd grade. More precisely, we have seen (Theorems \ref{mainth}) that the set of $n\times n$ symmetric matrix polynomials with rank at most $r<n$ and odd grade $d$ is the union of the closures of $\lfloor\frac{rd}{2}\rfloor+1$ subsets corresponding to $n\times n$ symmetric matrix polynomials with degree exactly $d$ and rank exactly $r$ having some specific eigenstructures, up to the values of the eigenvalues. Moreover, such subsets are open in the set of $n\times n$ symmetric matrix polynomials with rank at most $r$ and odd grade $d$ (Theorem \ref{th:open}). Therefore, the eigenstructures described in Theorem \ref{mainth} are the generic ones. In order to prove this main result, we have shown that, given any list of $s$ scalar polynomials of degree $1$ with different roots, an integer $t\in\{0,1\}$, together with a list of $n-r$ nonnegative integers, there exists an $n\times n$ symmetric matrix polynomial with rank $r$ and grade $d$ whose finite elementary divisors are the ones in the given list of scalar polynomials, with $t$ infinite linear elementary divisors, and whose left (and, consequently, right) minimal indices are the given $n-r$ nonnegative integers, if and only if the sum of $s,t$, and twice the sum of the nonnegative integers equals $rd$.

Results in the spirit of this paper have been recently obtained in \cite{dmydop-laa-2017,dmydop-laa-2018} for, respectively, general (unstructured) $m\times n$ matrix polynomials with bounded deficient rank and given grade, and $n\times n$ skew-symmetric matrix polynomials of bounded deficient rank and given odd grade. However, we emphasize that the symmetric generic eigenstructures identified in this paper are very different from the eigenstructures found in \cite{dmydop-laa-2017,dmydop-laa-2018}, since the symmetric ones involve eigenvalues while the others not.

A natural continuation of the research in this paper and in \cite{dmydop-laa-2017,dmydop-laa-2018} consists of trying to determine the generic eigenstructures of the sets of other structured $n\times n$ matrix polynomials with bounded rank and given grade, like the $\top$-palindromic and $\top$-alternating ones. It is also natural to try to extend the results in this paper to symmetric matrix polynomials with even grade. However, this will require different tools and techniques since, to begin with, there is no a symmetric linearization like ${\cal F}_P$ in \eqref{linform} for symmetric matrix polynomials with even grade.

%\bigskip

%\noindent{\bf Acknowledgements.}

%{\small
\bibliographystyle{abbrv}
%\bibliography{library}

%}
\end{document}